\documentclass[journal]{IEEEtran}

\newcommand{\beq}{\begin{equation}}
\newcommand{\eeq}{\end{equation}}
\newcommand{\bsq}{\begin{subequations}}
\newcommand{\esq}{\end{subequations}}
\newcommand{\bq}{\begin{eqnarray}}
\newcommand{\eq}{\end{eqnarray}}
\newcommand{\bqn}{\begin{eqnarray*}}
\newcommand{\eqn}{\end{eqnarray*}}
\usepackage{bbm}
\usepackage[pdftex]{graphicx}
\graphicspath{{Figs/}}
\usepackage{enumerate}
\usepackage{enumitem} 
\usepackage[hidelinks]{hyperref}
\usepackage{cite}

\usepackage[utf8]{inputenc}
\usepackage[hidelinks]{hyperref}
\usepackage{bm}
\usepackage{booktabs}
\DeclareMathAlphabet{\mathcal}{OMS}{cmsy}{m}{n}
\usepackage{helvet}         
\usepackage{courier}        
\usepackage{type1cm}        
\usepackage{makeidx}         
\usepackage{graphicx}        
\usepackage{url} 
\usepackage{multicol}        
\usepackage[bottom]{footmisc}
\usepackage{ifthen}

\usepackage[usenames,dvipsnames]{color}
\usepackage{caption}

\makeindex             

\usepackage{graphics} 
\usepackage{graphicx} 
\usepackage{epsfig} 
\usepackage{epstopdf}
\usepackage{times} 
\usepackage[cmex10]{amsmath}

\usepackage{mathtools}  
\usepackage{amssymb}  

\usepackage{amsthm}
\allowdisplaybreaks[4]

\usepackage{amsfonts}
\usepackage{cite}
\usepackage{verbatim}
\usepackage{comment}
\usepackage{algorithm}   
\usepackage{algorithmic}
\usepackage{multirow}
\usepackage{cite}
\usepackage{stfloats}
\usepackage{supertabular}
\usepackage{longtable}
\usepackage{caption}
\usepackage{subcaption}

\DeclareGraphicsExtensions{.pdf,.png,.jpg,.eps,.ps}

\usepackage{arydshln}
\usepackage{multicol}
\usepackage{colortbl}
\theoremstyle{definition}
\newtheorem{theorem}{Theorem}

\newtheorem{proposition}{Proposition}

\theoremstyle{definition}

\newtheorem{remark}{Remark}

\ifCLASSINFOpdf
\else
\fi

\usepackage{cite}
\usepackage{comment}
\usepackage{ifthen}
\newboolean{showcomments}
\setboolean{showcomments}{true}

\usepackage{amsmath}
\allowdisplaybreaks[4]

\begin{document}

\title{Contextual Robust Optimization for AI Data Center Scheduling with Statistical Guarantees}

\author{Yijie Yang, Xi Weng, and Yue Chen, \IEEEmembership{Senior Member, IEEE}
\thanks{Y. Yang and Y. Chen are with the Department of Mechanical and Automation Engineering, the Chinese University of Hong Kong, Hong Kong, China (e-mail: yijieyang@cuhk.edu.hk, yuechen@mae.cuhk.edu.hk).} 
\thanks{X. Weng is with the Guanghua School of Management, Peking University, China (email: wengxi125@gsm.pku.edu.cn)}
}

\markboth{}
{Shell \MakeLowercase{\textit{et al.}}: Bare Demo of IEEEtran.cls for IEEE Journals}

\maketitle

\begin{abstract}
The rapid growth of AI workloads is substantially increasing data center electricity demand and carbon emissions, motivating the development of carbon-aware scheduling methods. However, effective scheduling is challenging because renewable generation and AI workloads are subject to forecast errors, while training and inference workloads exhibit heterogeneity in computational characteristics.  
This paper proposes a contextual robust optimization framework for AI data center operation. The proposed model explicitly captures the heterogeneous computational characteristics of AI training and inference workloads. To deal with renewable generation and workload forecast errors, we develop loss-based uncertainty learning models that directly map contextual features to covariate-dependent uncertainty sets. The resulting contextual joint chance-constrained scheduling problem is reformulated into a tractable robust optimization problem, and a calibration algorithm is developed to provide finite-sample probabilistic feasibility guarantees for multiple joint chance constraints. Numerical experiments based on real-world AI workload traces and renewable generation data show that the proposed method reduces operating costs by an average of 5.57\% compared to benchmark methods while maintaining reliable feasibility and strong computational scalability.
\end{abstract}

\begin{IEEEkeywords}
AI data center; robust optimization; uncertainty learning; carbon-aware scheduling; renewable energy.
\end{IEEEkeywords}

\section{Introduction}
The rapid growth of generative AI is substantially increasing the electricity demand of data centers. In 2025, data centers consumed approximately 485 TWh, representing about 1.7\% of global electricity use, and this demand is projected to reach approximately 950 TWh by 2030, accounting for around 3\% of global electricity demand \cite{IEA2026EnergyAI}.
Such large-scale energy consumption can lead to considerable greenhouse gas (GHG) emissions. In response, major technology companies have increasingly invested in renewable energy procurement and on-site generation. Amazon, Meta, and Google have collectively supported 22 GW of renewable energy capacity to advance their net-zero emission targets~\cite{acun2023carbon}. At the same time, due to the variability and intermittency of renewable sources, data centers need scheduling methods that can effectively align computing demand with renewable energy availability. 

Cost-aware and carbon-aware data center scheduling has received increasing attention in recent studies. Google classified workloads into inflexible and flexible ones and shifted flexible workloads to periods with sufficient renewable energy \cite{radovanovic2022carbon}. A multi-objective carbon-aware workload scheduling strategy was proposed in \cite{wang2025multi} to minimize carbon emissions while maximizing quality of service (QoS) under time-varying grid carbon intensity. The trade-off between operational and embodied carbon for 24/7 carbon-free data center operation was investigated in \cite{acun2023carbon}. In addition, a game-based model was introduced in \cite{wu2025game} to optimize workload allocation across spatial and temporal dimensions and reduce data center operating costs. These studies demonstrate the potential of renewable energy integration, energy storage, and workload shifting to reduce total operating costs and carbon emissions. However, most existing studies focus on general-purpose data centers and model computing demand using only simplified categories of flexible and inflexible workloads, without explicitly capturing the distinct characteristics of AI workloads. In particular, AI training jobs are heterogeneous in hardware type, model architecture, and delay tolerance, resulting in varying spatio-temporal scheduling flexibility and power consumption patterns. Inference requests exhibit heterogeneity across tasks, input and output lengths, and serving configurations, leading to varying latency requirements and energy profiles. 

Uncertainty in scheduling can significantly affect the planning and operation of energy systems and therefore cannot be overlooked in data center operations. Existing studies have addressed data center scheduling under uncertainty using stochastic optimization (SO) \cite{fan2025stochastic,wang2020stochastic}, robust optimization (RO) \cite{seyyedi2024application}, and distributionally robust optimization (DRO) \cite{CarbonAwareHall}. SO typically assumes that uncertain parameters follow a predefined probability distribution, which may be difficult to obtain accurately in practice. RO relaxes this requirement by protecting decisions against all realizations within a prescribed uncertainty set, but this worst-case protection often leads to conservative solutions. DRO further generalizes this idea by optimizing against the worst-case probability distribution within an ambiguity set constructed from empirical data.

Specifically, chance-constrained programming (CCP) is an SO approach, which allows constraints to be violated with a prescribed probability and thus provides a trade-off between operational reliability and conservatism. However, classical CCP usually requires prior knowledge of the underlying probability distribution and does not provide finite-sample probabilistic feasibility guarantees under distributional misspecification. To address these limitations, distributionally robust chance-constrained (DRCC) methods have been introduced into data center operation \cite{chen2025spatial,li2023distributionally}.  Nevertheless, solving complex CCP and DRCC formulations remains challenging due to their non-convexity, non-differentiability, and computational complexity, especially in large-scale data center scheduling problems.


In addition, most existing uncertainty-aware optimization methods do not explicitly incorporate contextual information, such as historical observations, weather conditions, and temporal features, when characterizing the uncertainty of prediction error. As a result, the joint observation of covariates and prediction error is ignored, and a static uncertainty set is applied to cover heterogeneous operating conditions. However, forecast errors are often context-dependent and exhibit significant variability across different forecast values \cite{wang2018conditional}. This mismatch can introduce unnecessary conservatism and increase operating costs in the downstream optimization problem.  



Recent studies have tried to incorporate contextual information into uncertainty modeling through machine learning (ML) and data-driven methods. For example, a sample average approximation (SAA)-based approach is proposed in \cite{kannan2024residuals}, which leverages covariates to construct ambiguity sets. Deep learning-based RO methods \cite{goerigk2023data,chenreddy2022data} learn uncertainty sets using neural networks and then reformulate them as mixed-integer programs (MIP). Similarly, an ML-MIP-RO method proposed in \cite{bertsimas2025constructing} relies on mixed-integer formulations to construct covariate-dependent uncertainty bounds. However, these approaches can be computationally expensive for large-scale systems and often rely on restrictive assumptions about prediction models \cite{chenreddy2022data}. Moreover, existing covariate-dependent methods mainly address uncertainty in the objective function \cite{bertsimas2025data} and do not explicitly provide probabilistic feasibility guarantees on joint chance constraints (JCCs).

Motivated by the aforementioned gaps, this paper proposes a contextual robust optimization method for carbon-aware AI data center workload scheduling. 
The main contributions of this paper are twofold:

1) \textit{Contextual robust optimization model for carbon-aware AI data center scheduling:}
We formulate a carbon-aware scheduling model for geographically distributed AI data centers under renewable generation and workload uncertainties. The proposed model explicitly captures the heterogeneous computational characteristics of AI training and inference workloads. Compared with general data center scheduling models, the proposed formulation provides a more accurate representation of AI computing demand and its operational flexibility. The resulting scheduling problem is further formulated as a contextual CCP with multiple JCCs, enabling reliable operation under forecast uncertainty.

2) \textit{Loss-based contextual uncertainty sets with probabilistic feasibility guarantees:}
    We develop loss-based uncertainty learning models to construct covariate-dependent uncertainty sets for renewable generation and workload uncertainties. Unlike traditional predict-then-optimize methods for RO, which first train a point prediction model and then construct uncertainty structures (e.g., uncertainty sets or distributions) from historical residuals, the proposed method directly learns the uncertainty sets from data. This enables the resulting uncertainty sets to capture the dependence of forecast errors on covariates and operating conditions. We further develop a calibration procedure that provides a finite-sample probabilistic feasibility guarantee for JCCs. Numerical results demonstrate that the proposed method reduces operational cost and carbon emissions by at least 5.57\% and 3.41\%, respectively, compared with benchmark methods.

\section{Model Formulation} \label{formulation}

We investigate the joint optimization of computing and energy resources for the low-carbon operation of geographically distributed data centers. Specifically, we consider a system comprising $N$ data centers interconnected through a common network, as illustrated in Fig.~\ref{fig:system_framework}. Each DC can purchase electricity from the main grid, utilize on-site renewable generation, and operate an energy storage system (ESS). 
\begin{figure}
    \centering
    \includegraphics[width=1\linewidth]{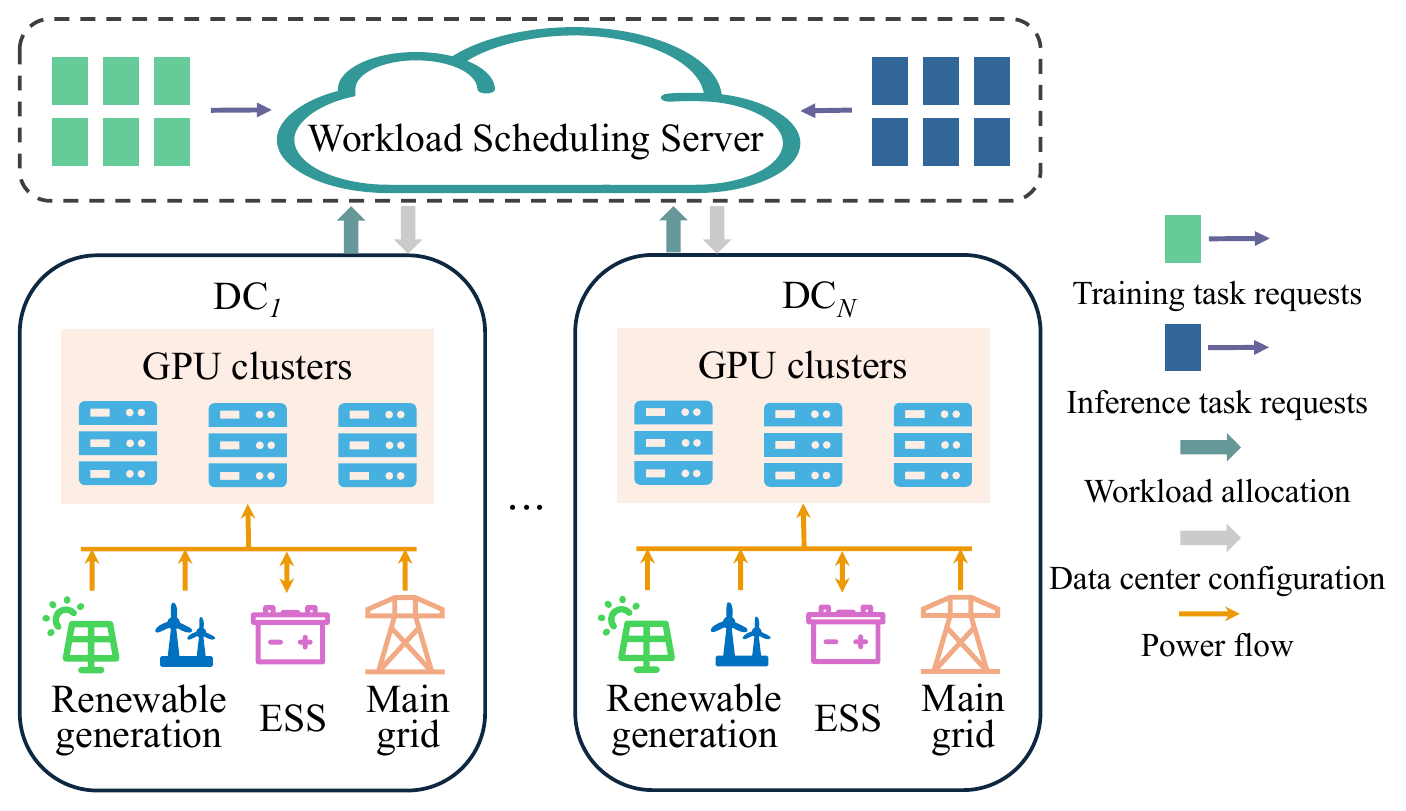}
    \caption{The framework of coordinated geographically distributed AI data centers}
    \label{fig:system_framework}
\end{figure}
\subsection{AI Workload Model}
In an AI-centric data center, workloads primarily consist of \emph{training} and \emph{inference} tasks. Training workloads are delay-tolerant with spatio-temporal flexibility, while inference workloads are latency-sensitive but spatially flexible via request routing across data centers. Based on these characteristics, workload allocation and energy procurement are co-optimized to minimize the total electricity and carbon costs while satisfying service-level and operational constraints.

\subsubsection{AI Training Workloads}

In practice, geographically distributed data center operations are large-scale, making explicit job-level scheduling computationally intractable. Therefore, in real-world data centers, such as Google’s clusters, load shifting is implemented through predefined cluster-level \emph{Virtual Capacity Curves} (VCCs) \cite{radovanovic2022carbon}, which specify hourly aggregate compute limits for each data center, thereby constraining the resources available to the real-time scheduler. Following this practice, we model AI training demand at an aggregate level.

Based on the model in \cite{CarbonAwareHall}, we classify training workloads according to their spatial and temporal flexibility. Each class $c \in \mathcal{C}$ is characterized by a temporal flexibility parameter $h_c$, which represents the maximum delay that workloads of class $c$ can tolerate, and by a set of data centers $\mathcal{D}_c \subseteq \mathcal{D}$, which specifies the locations at which workloads of class $c$ can be allocated. The training workload scheduling problem can be modeled as follows:
\begin{subequations}
\label{eq:1}
\begin{align}
\small
&\sum\nolimits_{d \in \mathcal{D}_c} \sum\nolimits_{t \in \mathcal{T}_{k}^{(c)}} y_{k,c,t,d}^{\mathrm{tra}} = 1,
\quad \forall k \in \mathcal{T},\ c \in \mathcal{C},
\label{eq:train_alloc_balance}\\
&y_{k,c,t,d}^{\mathrm{tra}} = 0,
\quad \forall k \in \mathcal{T},\  c \in \mathcal{C},\  d \in \mathcal{D} \setminus \mathcal{D}_c,\ t \in \mathcal{T},
\label{eq:train_alloc_space}\\
&y_{k,c,t,d}^{\mathrm{tra}} = 0,
\quad \forall k \in \mathcal{T},\ c \in \mathcal{C},\  d \in \mathcal{D},\ t \in \mathcal{T} \setminus \mathcal{T}_{k}^{(c)}.
\label{eq:train_alloc_time} 
\end{align}
\end{subequations}
where $y_{k,c,t,d}^{\mathrm{tra}} \in [0,1]$ is the proportion of the workload of class $c$ arriving at time slot $k$ that is executed at data center $d$ during time slot $t$.
$\mathcal{T}_{k}^{(c)} := \left\{ t \in \mathcal{T} \,\middle|\, k \le t \le \min(k+h_c,\,T) \right\}$ is the acceptable execution time slots. Eq. \eqref{eq:train_alloc_balance} ensures that all arriving workload is fully allocated, while Eqs. \eqref{eq:train_alloc_space} and \eqref{eq:train_alloc_time} enforce spatial and temporal feasibility, respectively.

The amount of class $c$ training workload executed at data center $d$ during time slot $t$ is then given by
\begin{align}
L_{d,c,t}^{\mathrm{tra}}
=
\sum\nolimits_{k \in \mathcal{T}}
y_{k,c,t,d}^{\mathrm{tra}} \lambda_{c,k}^{\mathrm{tra}},
\quad
\forall d \in \mathcal{D},\ c \in \mathcal{C},\ t \in \mathcal{T}.
\label{eq:train_load}
\end{align}
where $\lambda_{c,k}^{\mathrm{tra}}$ is the amount of class $c$ training workload arriving at time slot $k \in \mathcal{K}$. We measure $\lambda_{c,k}^{\mathrm{tra}}$ in GPU-equivalent units, which means that one unit corresponds to the use of one GPU during a single time slot of duration $\Delta_t$ (e.g., the workload is measured in GPU-hours if $\Delta_t = 1$ hour).

Let $G_{d,c,t}^{\mathrm{tra}}$
denote the GPU capacity assigned to training workloads of class $c$ at data center $d$ during time slot $t$. The corresponding capacity constraints are
\begin{subequations}
\begin{align}
& L_{d,c,t}^{\mathrm{tra}} \le \Delta_t G_{d,c,t}^{\mathrm{tra}},
\quad \forall d \in \mathcal{D},\ \forall c \in \mathcal{C},\ \forall t \in \mathcal{T},
\label{eq:train_gpu_match}\\
&\sum\nolimits_{c \in \mathcal{C}} G_{d,c,t}^{\mathrm{tra}} \le G_{d,\text{max}}^{\mathrm{tra}},
\quad \forall d \in \mathcal{D},\ \forall t \in \mathcal{T}.
\label{eq:train_gpu_cap}
\end{align}
\end{subequations}
Eq. \eqref{eq:train_gpu_match} ensures that the allocated GPU capacity is sufficient to serve the scheduled training workload of each class, while \eqref{eq:train_gpu_cap} enforces that the aggregate allocation does not exceed the GPU capacity of data center $d$. This formulation is applicable to both homogeneous and heterogeneous GPU settings. For heterogeneous GPUs, class $c$ can be defined to represent both workload type and GPU type, and the corresponding GPU capacity limits can be adjusted accordingly.

\subsubsection{AI Inference Workloads}

AI inference workloads are latency-sensitive and must be served upon arrival, but can be routed to DCs in different regions. Inference tasks are often classified by task (e.g., coding and conversation) and input and output lengths \cite{stojkovic2025dynamollm}. 
Let $i \in \mathcal{I}$ denote an inference service class, $d \in \mathcal{D}$ a data center, and $p \in \mathcal{P}_i$ a serving configuration, such as GPU frequency and tensor-parallelism (TP) degree \cite{zhong2024distserve}. 
Let $x_{i,d,p,t}$ denote the inference workload of service $i$ assigned to data center $d$ under serving configuration $p$ during time slot $t$, and $\lambda^{\mathrm{inf}}_{i,t}$ denote the corresponding arrival rate. The incoming workload must be fully assigned:
\begin{subequations}
\begin{align}
&\sum_{d \in \mathcal{D}} \sum_{p \in \mathcal{P}_i} x_{i,d,p,t}
\geq \lambda^{\mathrm{inf}}_{i,t},
\quad \forall i \in \mathcal{I},\; t \in \mathcal{T}, \\
& x_{i,d,p,t} \geq 0,
\quad \forall i, d, p, t.
\label{eq:inf_balance}
\end{align}
\end{subequations}
The end-to-end response latency $T^{\mathrm{res}}_{i,d,p,t}$ consists of queueing delay $T^{\mathrm{que}}_{i,d,p,t}$ and model execution delay $T^{\mathrm{exe}}_{i,d,p}$ as follows: 
\begin{equation}
T^{\mathrm{res}}_{i,d,p,t}
=
T^{\mathrm{que}}_{i,d,p,t}
+
T^{\mathrm{exe}}_{i,d,p},
\quad \forall i,d,p,t .
\label{eq:response_latency}
\end{equation}
The queueing delay is modeled as
\begin{subequations}
\begin{align}
&T^{\mathrm{que}}_{i,d,p,t}
=
\frac{1}{\mu_{i,d,p}-x_{i,d,p,t}/m_{i,d,p,t}},
\quad \forall i,d,p,t, 
\label{eq:inf_queue_delay} \\
&x_{i,d,p,t} \leq m_{i,d,p,t}\mu_{i,d,p},
\quad \forall i,d,p,t .
\label{eq:inf_queue_stability}
\end{align}
\end{subequations}
where $m_{i,d,p,t}$ denotes the number of active inference instances, and  $\mu_{i,d,p}$ denotes the rated service rate of each active instance.  Each active inference instance is approximated as an independent queue. 

Generally, the execution time of the inference task under different configurations, $T^{\mathrm{exe}}_{i,d,p}$, can be obtained through offline profiling and empirical measurements. For LLM inference, execution consists of the \textit{prefill} and \textit{decode} phases. The prefill phase processes the input prompt and constructs the KV cache, whereas the decode phase generates output tokens sequentially. Accordingly, the relevant latency SLOs include Time To First Token (TTFT) and Time Between Tokens (TBT) \cite{stojkovic2025dynamollm}. The TTFT latency and total generation latency are modeled as:
\begin{subequations}
\begin{align}
&T^{\mathrm{TTFT}}_{i,d,p,t}
=
T^{\mathrm{que}}_{i,d,p,t}
+
T^{\mathrm{pre}}_{i,d,p},
\quad \forall i,d,p,t ,\label{eq:first_token_latency}\\
&T^{\mathrm{res}}_{i,d,p,t}
=
T^{\mathrm{TTFT}}_{i,d,p,t}
+
L^{\mathrm{out}}_i T^{\mathrm{TBT}}_{i,d,p},
\quad \forall i,d,p,t .
\label{eq:total_generation_latency}
\end{align}
\end{subequations}
where $T^{\mathrm{pre}}_{i,d,p}$ denotes the prefill computation latency of LLM service $i$ under serving configuration $p$ and $T^{\mathrm{TBT}}_{i,d,p}$ denotes the average time between output tokens. They can be obtained through offline profiling of the LLM serving system under different configurations. $L^{\mathrm{out}}_i$ denotes the expected output sequence length of service class $i$. Since inference requests are classified according to their input and output sequence lengths, $L^{\mathrm{out}}_i$ can be estimated from historical traces as the maximum tokens for requests belonging to class $i$. 

To satisfy latency requirements, each active serving configuration must meet the corresponding latency service level objectives (SLOs) as follows:
\begin{equation}
T^{\mathrm{res}}_{i,d,p,t}
\leq
T^{\mathrm{SLO,res}}_i,
\quad \forall i,d,p,t .
\label{eq:response_sla}
\end{equation}

Additional latency requirements are imposed on the TTFT latency and TBT latency for LLM inference services:
\begin{subequations}
\begin{align}
&T^{\mathrm{TTFT}}_{i,d,p,t}
\leq
T^{\mathrm{SLO,TTFT}}_i,
\quad \forall i,d,p,t ,
\label{eq:first_token_sla} \\
&T^{\mathrm{TBT}}_{i,d,p}
\leq
T^{\mathrm{SLO,TBT}}_i,
\quad \forall i,d,p.
\label{eq:tbt_sla}
\end{align}
\end{subequations}
The serving configuration $p$ can represent the TP degree, e.g., $\mathcal{P}_i=\{2,4,8\}$, where a larger TP degree allocates more GPUs to one inference instance and may reduce computation latency at the cost of increased power consumption. Let $g_{i,p}$ denote the number of GPUs required by one active instance of service $i$ under serving configuration $p$, and $G^{\max}_{d,t}$ denote the total available GPUs at data center $d$ during time slot $t$. 
The GPU capacity constraint is modeled as
\begin{equation}
\sum_{i \in \mathcal{I}} \sum_{p \in \mathcal{P}_i}
g_{i,p} m_{i,d,p,t}
\leq
G^{\max}_{d,t},
\quad \forall d \in \mathcal{D},\; t \in \mathcal{T}.
\label{eq:gpu_capacity}
\end{equation}

\subsection{AI Data Center Power Consumption Model}

The total power consumption of an AI data center consists of the power consumed by IT equipment that processes workloads and the auxiliary services, such as cooling. Since GPU power dominates the workload-dependent component of IT power, the total data center power consumption $P_{d,t}^{\mathrm{DC}}$ is modeled as follows:
\begin{subequations}
\begin{align}
& P_{d,t}^{\mathrm{DC}} = \mathrm{PUE}_d \, P_{d,t}^{\mathrm{IT}},
~ \forall d \in \mathcal{D},\ t \in \mathcal{T}, \\
& P_{d,t}^{\mathrm{IT}} = P_{d}^{\mathrm{base,IT}} + \alpha_d \left( P_{d,t}^{\mathrm{tra}} + P_{d,t}^{\mathrm{inf}} \right),
~ \forall d \in \mathcal{D},\ t \in \mathcal{T},
\end{align}
\end{subequations}
where $\mathrm{PUE}_d$ denotes the Power Usage Effectiveness factor of data center $d$. It is defined as the ratio of total facility power to IT power. $P_{d,t}^{\mathrm{IT}}$ denotes the total IT power, and $\alpha_d$ is the GPU-to-IT power scaling factor \cite{reddy2025ai} that accounts for the additional IT power consumption of other components, including CPUs, memory, storage, and networking devices.

Training workloads are executed on GPUs with high but relatively stable utilization. The GPU power consumption for training is given by
\begin{align}
\small
&P_{d,t}^{\mathrm{tra}}
=
\sum\nolimits_{c \in \mathcal{C}}
G_{d,c,t}^{\mathrm{tra}}
P_{d,c}^{\mathrm{p,tra}}
u_{d,c,t}^{\mathrm{tra}}, \forall d \in \mathcal{D},\ t \in \mathcal{T},
\label{eq:train_power}
\end{align}
where $P_{d,c}^{\mathrm{p,tra}}$ is the peak GPU power for training workload class $c$ at data center $d$, and $u_{d,c,t}^{\mathrm{tra}} $ denotes the average utilization rate of GPUs of training class $c$ workloads.

Since inference tasks are deployed on different instances, each instance can be seen as a server.  The total inference GPU power at data center $d$ is
\begin{subequations}
\begin{align}
&P_{d,t}^{\mathrm{inf}}
=
\sum\nolimits_{i \in \mathcal{I},p \in \mathcal{P}_i} m_{i,d,p,t}
\Big[
P_{i,d,p}^{\mathrm{i,inf}}+
\big(P_{i,d,p}^{\mathrm{p,inf}} - P_{i,d,p}^{\mathrm{i,inf}}\big)
\rho_{i,d,p,t}^{\mathrm{inf}}
\Big], \notag \\
&\qquad \qquad \qquad \qquad \qquad \qquad \qquad \forall d \in \mathcal{D}, t \in \mathcal{T},
\label{eq:inf_power} \\
&\rho_{i,d,p,t}^{\mathrm{inf}}
=
\frac{x_{i,d,p,t}}{m_{i,d,p,t}\mu_{i,d,p}},
\qquad
\forall i \in \mathcal{I},\, d \in \mathcal{D},\, p \in \mathcal{P}_i,\, t \in \mathcal{T}.
\label{eq:inf_util}
\end{align}
\end{subequations}
where $\mu_{i,d,p}$ is the rated service rate of one active server. $P_{i,d,p}^{\mathrm{i,inf}}$ and $P_{i,d,p}^{\mathrm{p,inf}}$ represent the idle and peak power of one active server, respectively. $\rho_{i,d,p,t}^{\mathrm{inf}}$ denotes the average utilization of active serving instances.

\subsection{Energy Storage System Model}

Each data center is equipped with an energy storage system (ESS) to shift electricity consumption across time. Let $E_{d,t}^{\mathrm{ess}}$, $P_{d,t}^{\mathrm{ch}}$, and $P_{d,t}^{\mathrm{dis}}$ denote the stored energy, charging power, and discharging power of the ESS at data center $d$ and time slot $t$, respectively. The ESS operation is modeled as
\begin{subequations}
\label{eq:ess}
\begin{align}
&SOC_{d,t} = E_{d,t}^{\mathrm{ess}}/{E_d^{\max}}, \forall d \in \mathcal{D},\ t \in \mathcal{T},
\label{eq:ess_soc}\\
&E_{d,t}^{\mathrm{ess}}
=
E_{d,t-1}^{\mathrm{ess}}
+
\left(
\eta_d^{\mathrm{ch}} P_{d,t}^{\mathrm{ch}}
-
P_{d,t}^{\mathrm{dis}}/{\eta_d^{\mathrm{dis}}}
\right)\Delta, \forall d \in \mathcal{D},\ t \in \mathcal{T},
\label{eq:ess_dynamics}\\
&E_{d,1}^{\mathrm{ess}} = E_{d,|\mathcal{T}|}^{\mathrm{ess}},
 \forall d \in \mathcal{D},
\label{eq:ess_cyclic}\\
&SOC_d^{\min} \le SOC_{d,t} \le SOC_d^{\max},
 \forall d \in \mathcal{D},\ t \in \mathcal{T},
\label{eq:ess_soc_bound}\\
&0 \le P_{d,t}^{\mathrm{ch}} \le z_{d,t}^{\mathrm{ess}} P_d^{\max,\mathrm{ch}},
 \forall d \in \mathcal{D},\ t \in \mathcal{T},
\label{eq:ess_ch_limit}\\
&0 \le P_{d,t}^{\mathrm{dis}} \le (1-z_{d,t}^{\mathrm{ess}}) P_d^{\max,\mathrm{dis}},
 \forall d \in \mathcal{D},\ t \in \mathcal{T},
\label{eq:ess_dis_limit}\\
&z_{d,t}^{\mathrm{ess}} \in \{0,1\},
 \forall d \in \mathcal{D},\ t \in \mathcal{T},
\label{eq:ess_binary}
\end{align}
\end{subequations}
where $E_d^{\max}$ is the BESS energy capacity, $SOC_d^{\min}$ and $SOC_d^{\max}$ are the minimum and maximum state-of-charge limits, $P_d^{\max,\mathrm{ch}}$ and $P_d^{\max,\mathrm{dis}}$ are the charging and discharging power limits, $E_{d,1}^{\mathrm{ess}}$ denotes the amount of initial stored energy, and $\eta_d^{\mathrm{ch}}$ and $\eta_d^{\mathrm{dis}}$ are the charging and discharging efficiencies, respectively.

\subsection{Joint CCP Formulation}
To enable carbon-aware load shifting, future renewable generation and workload arrivals need to be predicted in advance. Although forecasts are available, prediction errors are unavoidable. To ensure reliable operation under such uncertainty, constraints involving predicted variables are formulated as chance constraints:
\begin{subequations}
 \begin{align}
&\resizebox{0.85\linewidth}{!}{$\displaystyle
\mathbb{P}\left(
P_{d,t}^{\mathrm{renew}} + P_{d,t}^{\mathrm{grid}} + P_{d,t}^{\mathrm{dis}}
\ge P_{d,t}^{\mathrm{DC}} + P_{d,t}^{\mathrm{ch}}
\right) \ge 1-\epsilon_{d,t},\forall d, t 
$}
\label{cc1_new}\\
&\resizebox{0.85\linewidth}{!}{$\displaystyle
\mathbb{P} \left(
\Delta_t G_{d,c,t}^{\mathrm{tra}} \ge \sum\nolimits_{k} y_{k,c,t,d}^{\mathrm{tra}} \lambda_{c,k}^{\mathrm{tra}},
\ \forall t
\right) \ge 1-\epsilon_{d,c}, \forall d, c. \label{cc2_new}
$}\\
&\resizebox{0.8\linewidth}{!}{$\displaystyle
\mathbb{P} \left(
\sum\nolimits_{d} \sum\nolimits_{p} x_{i,d,p,t}
\ge \lambda_{i,t}^{\mathrm{inf}},
\ \forall t
\right) \ge 1-\epsilon_{i}, \forall i. \label{cc3_new}
$}
\end{align}
\end{subequations}
Eq. \eqref{cc1_new} ensures that the power supply is sufficient to meet the data-center power demand. Eqs. \eqref{cc2_new} and \eqref{cc3_new} require that the allocated computing capacity can satisfy the training and inference workload demands, respectively. The power balance constraints are modeled as individual chance constraints (ICCs) to enforce per-slot adequacy, while the workload capacity constraints are formulated as stricter joint chance constraints (JCCs) to ensure service feasibility over the entire scheduling horizon.

The overall problem is written as:
\begin{subequations}
\begin{align}
\min &  \quad
\sum_{t \in \mathcal{T}} \sum_{d \in \mathcal{D}}
\alpha_{d,t}^{e} P_{d,t}^{\mathrm{grid}}\Delta_t
+
\sum_{d \in \mathcal{D}} \lambda_d^{c} C_d\Delta_t\\
\mathbf{s.t.} 
&\quad  C_d = \sum_{t \in \mathcal{T}} CI_{d,t} \, P_{d,t}^{\mathrm{grid}},
\quad \forall d \in \mathcal{D},\label{cb1_new}  \\
& \quad \eqref{eq:1} - \eqref{eq:ess}, \eqref{cc1_new} - \eqref{cc3_new},
\end{align}
\end{subequations}
where $\alpha_{d,t}^{e}$ denotes the electricity price at data center $d$ and time slot $t$, $\lambda_d^c$ denotes the carbon tax at data center $d$, and $CI_{d,t}$ denotes the carbon intensity of the main grid supplying data center $d$ at time $t$. The objective includes the total energy purchase cost and the carbon emission cost.

\section{Uncertainty Learning-based Contextual Robust Optimization}
In the conventional predict-then-optimize robust optimization framework, the first stage typically trains a point prediction model for uncertainty by minimizing the discrepancy between predicted values and observed realizations. In the second stage, prediction errors are calculated on a test set and used to construct uncertainty sets for subsequent robust optimization.
However, these uncertainty sets are usually constructed from historical prediction errors and are therefore independent of the contextual features used by the prediction model. Moreover, conventional methods often lack explicit probabilistic feasibility guarantees for constraints, particularly for complex JCCs. To address these limitations, this section first introduces an integrated uncertainty learning (UL) and robust optimization framework, as shown in Fig.~\ref{fig:methods_framework}. The proposed UL model directly maps contextual features to uncertainty sets. We then develop a calibration procedure to provide probabilistic feasibility guarantees for the JCCs.

\begin{figure}
    \centering
    \includegraphics[width=1\linewidth]{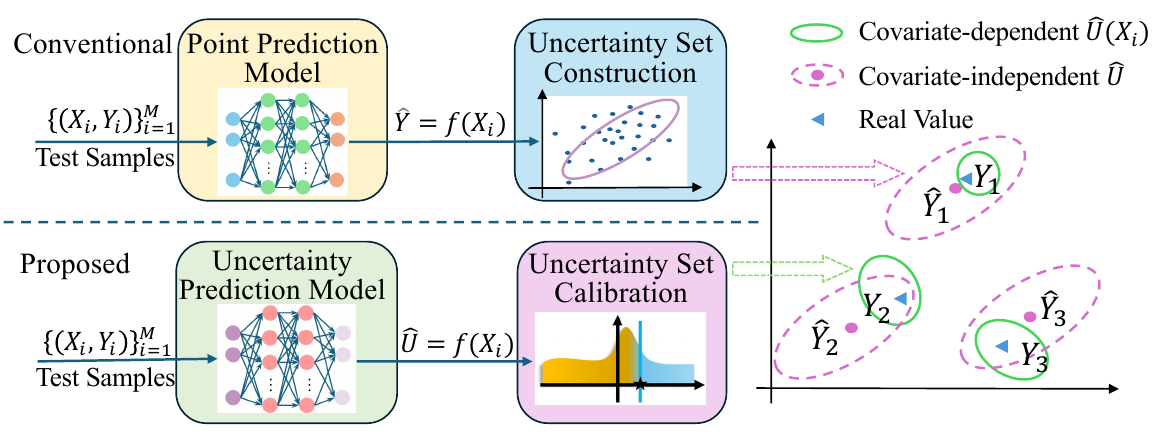}
    \caption{The difference between the proposed and conventional covariate-independent RO framework}
    \label{fig:methods_framework}
\end{figure}

\subsection{Contextual Robust Optimization}

Observing that all constraints inside the probability operators of Eqs.~\eqref{cc1_new}--\eqref{cc3_new} are linear, they can be expressed in a generic form:
\begin{align} g_{l,j}(\boldsymbol{x},\boldsymbol{\xi}_l) = \left( \boldsymbol{H}_{l,j}\boldsymbol{\xi}_l \right)^{\top} \boldsymbol{x} - b_{l,j}(\boldsymbol{x}), \quad \forall j\in [m_l],\ \forall l, \label{eq:generic_glj} \end{align} 
where $\boldsymbol{x}$ is the decision variable, $\boldsymbol{\xi}_l$ is the uncertain vector associated with the $l$-th chance constraint, $\boldsymbol{H}_{l,j}$ is the corresponding coefficient matrix, and $b_{l,j}(\boldsymbol{x})$ is affine in $\boldsymbol{x}$; and $l\in\mathcal{L}$ is the index of chance constraints, while $j \in [m_l]$ is the index of the constraints inside the probability operator. When $m_l=1$, the $l$-th constraint is an ICC, such as Eq.~\eqref{cc1_new}; otherwise, it is a JCC, such as Eq.~\eqref{cc2_new} and Eq.~\eqref{cc3_new}. We define a new function $h_l(\boldsymbol{x},\boldsymbol{\xi}_l)$ as:
\begin{align} h_l(\boldsymbol{x},\boldsymbol{\xi}_l) = \max_{j\in [m_l]} g_{l,j}(\boldsymbol{x},\boldsymbol{\xi}_l). \label{eq:hl_definition} \end{align} Then the overall CCP with multiple JCCs can be written as 
\begin{subequations} \label{eq:standard_ccp} \begin{align} &f^* = \min_{\boldsymbol{x}\in\mathcal{X}} f(\boldsymbol{x}) \label{eq:standard_ccp_obj} \\ \mathrm{s.t.}\quad & \mathbb{P}_{\boldsymbol{\xi}_l} \left( h_l(\boldsymbol{x},\boldsymbol{\xi}_l)\le 0 \right) \ge 1-\epsilon_l, \quad \forall l\in\mathcal{L}.\label{eq:standard_ccp_cc} \end{align} \end{subequations}

In practice, the uncertain vector
$\boldsymbol{\xi}_l$ is correlated with an observable covariate vector
$\boldsymbol{\psi}_l$. By conditioning on such contextual information, the
decision maker can exploit covariate-dependent uncertainty structures. The
contextual CCP with multiple JCCs is therefore formulated as
 \begin{subequations} \label{eq:standard_ccp} \begin{align} &\quad f^* = \min_{\boldsymbol{x}\in\mathcal{X}} f(\boldsymbol{x}) \label{eq:standard_ccp_obj} \\ \mathrm{s.t.}\quad & \mathbb{P}_{\xi_l \mid X} \left( h_l(\boldsymbol{x},\boldsymbol{\xi}_l)\le 0 | X=  \psi_l \right) \ge 1-\epsilon_l, \forall l.\label{eq:standard_c_ccp} \end{align} \end{subequations}
where $\mathbb{P}_{\boldsymbol{\xi}|X}$ is the conditional probability of $\xi$ given $X$. 
Since the conditional distribution of $\xi_l$ given the covariates is generally unavailable, directly solving the contextual CCP is difficult. We therefore replace each contextual JCC with a deterministic robust constraint defined over a covariate-dependent uncertainty set. Specifically, for a new context $\psi_l$, the uncertainty set $\mathcal{U}_l(\psi_l)$ is constructed from historical samples and contextual features, and the scheduling problem \eqref{eq:standard_ccp} is approximated by the following RO counterpart:
\begin{subequations} \label{eq:ro_multiple_jcc} \begin{align} f^* = & \min_{\boldsymbol{x}\in\mathcal{X}} \quad f(\boldsymbol{x}) \\ \mathrm{s.t.}\quad & h_{l}(\boldsymbol{x},\boldsymbol{\xi}_l)\le 0, \quad \forall \boldsymbol{\xi}_l\in\mathcal{U}_l(\psi_l), \  \forall l. \label{eq:jcc}\end{align} \end{subequations} 

\begin{theorem}\label{prop:robust_transformation_ccp} Any feasible solution of \eqref{eq:ro_multiple_jcc} is feasible for the chance constraints in \eqref{eq:standard_ccp} if the contextual uncertainty set $\mathcal{U}_l(\psi_l)$ for $\boldsymbol{\xi}_l$ in \eqref{eq:jcc} satisfies 
\begin{align}
\mathbb{P} \left( \boldsymbol{\xi}_l\in\mathcal{U}_l(\psi_l) \right) \ge 1-\epsilon_l . 
\end{align}
\end{theorem}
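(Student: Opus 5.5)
The argument is a monotonicity-of-probability (event inclusion) argument, carried out separately for each $l\in\mathcal{L}$ because both the robust counterpart \eqref{eq:ro_multiple_jcc} and the chance constraints decouple across $l$. Fix a feasible $\boldsymbol{x}$ of \eqref{eq:ro_multiple_jcc} and an index $l$. By \eqref{eq:jcc}, every $\boldsymbol{\xi}_l\in\mathcal{U}_l(\psi_l)$ satisfies $h_l(\boldsymbol{x},\boldsymbol{\xi}_l)\le 0$. By the definition \eqref{eq:hl_definition}, this is the same as $g_{l,j}(\boldsymbol{x},\boldsymbol{\xi}_l)\le 0$ for all $j\in[m_l]$. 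So the joint (or individual, when $m_l=1$) constraint holds on the whole set, and we get the event inclusion
\begin{align*}
\{\boldsymbol{\xi}_l\in\mathcal{U}_l(\psi_l)\}\subseteq\{h_l(\boldsymbol{x},\boldsymbol{\xi}_l)\le 0\}.
\end{align*}

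Next, apply monotonicity of probability to this inclusion, using the conditional law $\mathbb{P}_{\xi_l\mid X}(\cdot\mid X=\psi_l)$. We must read the hypothesis $\mathbb{P}(\boldsymbol{\xi}_l\in\mathcal{U}_l(\psi_l))\ge 1-\epsilon_l$ as a coverage statement under this same conditional distribution. Since $\mathcal{U}_l(\psi_l)$ is a deterministic set once the context $\psi_l$ is fixed, this reading makes sense. We then obtain
\begin{align*}
\mathbb{P}_{\xi_l\mid X}\left(h_l(\boldsymbol{x},\boldsymbol{\xi}_l)\le 0\mid X=\psi_l\right)\ge \mathbb{P}_{\xi_l\mid X}\left(\boldsymbol{\xi}_l\in\mathcal{U}_l(\psi_l)\mid X=\psi_l\right)\ge 1-\epsilon_l,
\end{align*}
which is exactly \eqref{eq:standard_c_ccp}. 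If instead the unconditional CCP is intended, the same chain of inequalities works with the marginal law of $\boldsymbol{\xi}_l$. The claim follows by repeating this for every $l$. The deterministic constraints $\boldsymbol{x}\in\mathcal{X}$ are common to both problems, so they need no further argument.

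The main obstacle is not technical but interpretive: the probability in the hypothesis has to be taken under the same measure as the chance constraint. We also need measurability of the events involved. Measurability of $\{\boldsymbol{\xi}_l\in\mathcal{U}_l(\psi_l)\}$ is implicitly assumed, for instance because $\mathcal{U}_l(\psi_l)$ is closed. The event $\{h_l(\boldsymbol{x},\boldsymbol{\xi}_l)\le 0\}$ is measurable because $h_l$ is a finite maximum of affine functions of $\boldsymbol{\xi}_l$ and hence continuous. I will state this convention explicitly at the start of the proof. A further subtlety, which I will only remark on, arises when $\mathcal{U}_l$ is learned from random data. In that case the guarantee holds conditionally on the training and calibration data whenever the coverage inequality holds. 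A statement of the form "with high probability over the data" then needs the calibration results given later, and is not part of this theorem.
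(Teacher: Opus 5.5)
Your proposal is correct and follows essentially the same route as the paper: robust feasibility gives the event inclusion $\{\boldsymbol{\xi}_l\in\mathcal{U}_l(\psi_l)\}\subseteq\{g_{l,j}(\boldsymbol{x},\boldsymbol{\xi}_l)\le 0,\ \forall j\in[m_l]\}$, and monotonicity of the conditional probability then yields the chance constraint for each $l$. Your extra remarks, that the coverage hypothesis must be read under the same conditional law and that the events involved are measurable, only make explicit what the paper's proof leaves implicit.
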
 

The proof of Theorem \ref{prop:robust_transformation_ccp} is provided in the Appendix.

\begin{remark}
Traditional CCP generally requires distributional information of the uncertain parameters to evaluate or reformulate the probabilistic constraints. In contextual settings, this further requires characterizing the conditional distribution of the uncertainty given the observed covariates, which can be challenging when data are limited or high-dimensional. In contrast, the RO counterpart only relies on a prescribed uncertainty set. Theorem~\ref{prop:robust_transformation_ccp} shows that if the uncertainty set covers the true realization of $\xi_l$ with probability at least $1-\epsilon_l$, then any feasible solution of the RO counterpart also satisfies the corresponding JCCs. Therefore, the information of joint observations of $\psi_l$ and $\xi_l$ can be incorporated into the uncertainty representation without explicitly estimating the full conditional distribution.
\end{remark}

\subsection{Integrating Uncertainty into Prediction}

For a prediction model, a loss function is commonly used to quantify the discrepancy between predicted values and their corresponding realizations. Let $\hat{y}_n=f(\boldsymbol{X}_n)$ denote the prediction of the model for the input feature vector $\boldsymbol{X}_n$, and let $y_n$ denote the observed realization. Given a training dataset $\{(\boldsymbol{X}_n,y_n)\}_{n=1}^{N_1}$, the prediction model can be trained by minimizing an empirical loss function:
\begin{align}
    \min_{f} \ \frac{1}{N_1}\sum\nolimits_{n=1}^{N_1} 
    \ell\left(y_n, f(\boldsymbol{X}_n)\right),
    \label{eq:empirical_loss}
\end{align}
where $\ell(\cdot)$ measures the distance between the predicted value and the realized value. Two commonly used loss functions for regression tasks are the mean squared error (MSE) and the mean absolute error (MAE). Both losses quantify the discrepancy between predicted and realized values. A larger loss indicates a higher prediction error and thus suggests higher predictive uncertainty, whereas a smaller loss implies the prediction is closer to the observed outcome and reflects greater confidence in the model output.

Specifically, inspired by the data-driven uncertainty set construction framework in \cite{bertsimas2025data}, we construct covariate-dependent uncertainty sets by training prediction models whose loss functions are designed to characterize predictive uncertainty. Rather than first learning a point prediction and then constructing uncertainty sets from historical residuals, the proposed uncertainty-learning model directly maps contextual features to the parameters of the uncertainty set. In this work, we consider two realizations of this idea. For box uncertainty sets, an interval prediction model is trained using the interval quantile loss to predict lower and upper bounds. For ellipsoidal uncertainty sets, a mean-variance prediction model is trained to predict the contextual mean and variance, which define a loss-induced ellipsoidal uncertainty set. 

\subsubsection{Interval Quantile Loss}
For the box uncertainty set, uncertainty is characterized by the lower and upper bounds of the uncertain parameters. To learn these bounds, we propose the \emph{Interval Quantile Loss} (IQL):
\begin{align}
\small
\mathcal{L}_{\mathrm{IQL}}
&=
\frac{1}{N_1k}
\sum_{n=1}^{N_1}
\sum_{i=1}^{k}
\Big[
\ell_{\tau_{\mathrm{low}}}
\!\left(y^{i}_{n},\underline{y}^{i}(\boldsymbol{X}_{n})\right)
+\ell_{\tau_{\mathrm{high}}}
\!\left(y^{i}_{n},\overline{y}^{i}(\boldsymbol{X}_{n})\right)\nonumber
\\
&
+
\lambda_{\mathrm{w}}
\left(
\overline{y}^{i}(\boldsymbol{X}_{n})
-
\underline{y}^{i}(\boldsymbol{X}_{n})
\right)
\Big],
\label{eq:ciql_loss}
\end{align}
where $\lambda_{\mathrm{w}}\geq 0$  controls the width of the learned interval. 
$
\ell_{\tau}(y,\hat{y})
=
\max
\left\{
\tau(y-\hat{y}),
(\tau-1)(y-\hat{y})
\right\}
\label{eq:pinball_loss}
$ denotes the pinball loss. The quantile loss is used because the construction of a box uncertainty set requires estimating lower and upper conditional bounds of the uncertain parameters. While conventional losses such as MSE and MAE learn central predictions, the pinball loss directly estimates conditional quantiles. Hence, the lower- and upper-quantile losses learn context-dependent interval bounds, which can be directly embedded into the box uncertainty set. The width regularization term further penalizes excessively wide intervals to reduce conservatism. 

\subsubsection{Mean-Variance Loss}

Uncertainty can also be characterized by the mean and variance of the uncertain parameters. To this end, we propose the \emph{Mean-Variance Loss} (MVL):
\begin{align}
\small
\mathcal{L}_{\mathrm{MVL}}
=
\frac{1}{N_1k}
\sum_{n=1}^{N_1}
\sum_{i=1}^{k}
\Big[
\frac{
\left(
y^{i}_{n}
-
f_{i}(\boldsymbol{X}_{n})
\right)^{2}
}{
\sigma_{i}^{2}(\boldsymbol{X}_{n})
}
+
\log
\sigma_{i}^{2}(\boldsymbol{X}_{n})
\Big],
\label{eq:cmvl_loss}
\end{align}
where $\boldsymbol{\sigma}^{2}(\boldsymbol{X})$ denotes the variance. The first term measures the normalized prediction error. The second term regularizes the predicted variance. This formulation can yield an ellipsoidal uncertainty set centered at the predicted average value, with the shape determined by the learned variance. 

\subsection{Calibration for Probabilistic Feasibility Guarantees of JCCs}
\label{subsec:prob_guarantee}

After learning the contextual uncertainty sets, we further calibrate their sizes using an independent calibration dataset to obtain finite-sample coverage guarantees. Let
$\mathcal{D}_{\mathrm{cal}}=\{(\boldsymbol{X}_{n},\boldsymbol{y}_{n})\}_{n=1}^{N_2}$
denote the calibration dataset, and let $\epsilon$ denote the prescribed risk tolerance. The main idea is to compute a nonconformity score for each calibration sample, which measures how far the observed realization deviates from the predicted contextual uncertainty set. The empirical $(1-\epsilon)$-quantile of these scores is then used to enlarge the uncertainty set.
Specifically, the calibration parameter is the interval margin $q$ for the box uncertainty set and the radius $\rho$ for the ellipsoidal uncertainty set.

For the interval-based uncertainty learning model, the calibrated contextual box uncertainty set is defined as
\begin{align}
\mathcal{U}^{\mathrm{box}}(\boldsymbol{X})
=
\left\{
\boldsymbol{y}\in\mathbb{R}^{k}
\ \middle|\
\underline{y}^{i}(\boldsymbol{X})-q
\leq y^{i}
\leq
\overline{y}^{i}(\boldsymbol{X})+q,
\ \forall i
\right\}.
\label{eq:box_uncertainty_set}
\end{align}

For the mean-variance uncertainty learning model, the calibrated contextual ellipsoidal uncertainty set is defined as
\begin{align}
\mathcal{U}^{\mathrm{ell}}(\boldsymbol{X})
=
\left\{
\boldsymbol{y}\in\mathbb{R}^{k}
\ \middle|\
\sum_{i=1}^{k}
\frac{
\left(y^{i}-\mu_{i}(\boldsymbol{X})\right)^{2}
}{
\sigma_{i}^{2}(\boldsymbol{X})
}
\leq
\rho
\right\}.
\label{eq:mv_uncertainty_set}
\end{align}

Algorithms~\ref{alg:box_calibration} and~\ref{alg:ellipsoid_calibration} summarize the calibration procedures for the box and ellipsoidal uncertainty sets, respectively. In Algorithm~\ref{alg:box_calibration}, the nonconformity score is defined as the maximum violation of the predicted interval over all dimensions. The empirical $(1-\epsilon)$-quantile of these scores is then used as a common margin to expand both the lower and upper bounds. Since the score is computed jointly across all dimensions, the calibrated box uncertainty set provides joint coverage for the entire uncertain vector. Therefore, it can be directly embedded into the downstream robust optimization model to obtain probabilistic feasibility guarantees for JCCs.

Similarly, in Algorithm~\ref{alg:ellipsoid_calibration}, the nonconformity score is defined as the normalized squared deviation from the predicted contextual mean. The empirical $(1-\epsilon)$-quantile of these scores determines the calibrated ellipsoidal radius. Under the exchangeability assumption, the resulting calibrated uncertainty set satisfies finite-sample coverage and can be used as a tractable robust approximation of the contextual JCCs.

\begin{algorithm}[t]
\caption{Box Uncertainty Set Calibration}
\small
\label{alg:box_calibration}
\begin{algorithmic}[1]
\REQUIRE Trained interval prediction model $\hat{f}_{\mathrm{IQL}}$, calibration dataset
$\mathcal{D}_{\mathrm{cal}}=\{(\boldsymbol{X}_{n},\boldsymbol{y}_{n})\}_{n=1}^{N_2}$, risk tolerance $\epsilon$, new sample $\boldsymbol{X}_{N_2+1}$
\ENSURE Calibrated box uncertainty set $\hat{\mathcal{U}}_{\mathrm{box}}(\boldsymbol{X}_{N_2+1})$

\FOR{$n=1,\dots,N_2$}
    \STATE Predict
    $[\underline{\boldsymbol{y}}(\boldsymbol{X}_{n}),
    \overline{\boldsymbol{y}}(\boldsymbol{X}_{n})]
    \gets
    \hat{f}_{\mathrm{IQL}}(\boldsymbol{X}_{n})
    .$
    
    \STATE Compute the joint interval nonconformity score \\
    $
    s_{n}
    \gets
    \max_{i=1,\dots,k}
    \left\{
    \underline{y}^{i}(\boldsymbol{X}_{n})-y^{i}_{n},
    y^{i}_{n}-\overline{y}^{i}(\boldsymbol{X}_{n}),
    0
    \right\}.
    $
\ENDFOR

\STATE Sort $\{s_{n}\}_{n=1}^{N_2}$ in ascending order as
$s^{(1)}\leq \cdots \leq s^{(N_2)}$.

\STATE Set
$
r
\gets
\left\lceil
(N_2+1)(1-\epsilon)
\right\rceil
$, $
q
\gets
s^{(r)}.
$

\STATE Predict
$
[\underline{\boldsymbol{y}}(\boldsymbol{X}_{N_2+1}),
\overline{\boldsymbol{y}}(\boldsymbol{X}_{N_2+1})]
\gets
\hat{f}_{\mathrm{IQL}}(\boldsymbol{X}_{N_2+1}).
$

\STATE Construct $\hat{\mathcal{U}}_{\mathrm{box}}(\boldsymbol{X}_{N_2+1})$ according to Eq.~\eqref{eq:box_uncertainty_set}.

\STATE \textbf{return} $\hat{\mathcal{U}}_{\mathrm{box}}(\boldsymbol{X}_{N_2+1})$.
\end{algorithmic}
\end{algorithm}

\begin{algorithm}[t]
\caption{Ellipsoidal Uncertainty Set Calibration}
\small
\label{alg:ellipsoid_calibration}
\begin{algorithmic}[1]
\REQUIRE Trained mean--variance prediction model $\hat{f}_{\mathrm{MVL}}$, calibration dataset
$\mathcal{D}_{\mathrm{cal}}=\{(\boldsymbol{X}_{n},\boldsymbol{y}_{n})\}_{n=1}^{N_2}$, risk tolerance $\epsilon$, new sample $\boldsymbol{X}_{N_2+1}$
\ENSURE Calibrated ellipsoidal uncertainty set $\hat{\mathcal{U}}_{\mathrm{ell}}(\boldsymbol{X}_{N_2+1})$

\FOR{$n=1,\dots,N_2$}
    \STATE Predict
    $
    [\boldsymbol{\mu}(\boldsymbol{X}_{n}),
    \boldsymbol{\sigma}^{2}(\boldsymbol{X}_{n})]
    \gets
    \hat{f}_{\mathrm{MVL}}(\boldsymbol{X}_{n}).
    $
    
    \STATE Compute the ellipsoidal nonconformity score \\
    $
    s_{n}
    \gets
    \sum_{i=1}^{k}
    \frac{
    \left(y^{i}_{n}-\mu_{i}(\boldsymbol{X}_{n})\right)^{2}
    }{
    \sigma_{i}^{2}(\boldsymbol{X}_{n})
    }.
    $
\ENDFOR

\STATE Sort $\{s_{n}\}_{n=1}^{N_2}$ in ascending order as
$s^{(1)}\leq \cdots \leq s^{(N_2)}$.

\STATE Set
$
r
\gets
\left\lceil
(N_2+1)(1-\epsilon)
\right\rceil$,
$
\rho
\gets
s^{(r)}$.

\STATE Predict
$
[\boldsymbol{\mu}(\boldsymbol{X}_{N_2+1}),
\boldsymbol{\sigma}^{2}(\boldsymbol{X}_{N_2+1})]
\gets
\hat{f}_{\mathrm{MVL}}(\boldsymbol{X}_{N_2+1}).
$

\STATE Construct $\hat{\mathcal{U}}_{\mathrm{ell}}(\boldsymbol{X}_{N_2+1})$ according to Eq.~\eqref{eq:mv_uncertainty_set}.

\STATE \textbf{return} $\hat{\mathcal{U}}_{\mathrm{ell}}(\boldsymbol{X}_{N_2+1})$.
\end{algorithmic}
\end{algorithm}
The following theorem provides a finite-sample coverage guarantee for both Algorithm~\ref{alg:box_calibration} and Algorithm~\ref{alg:ellipsoid_calibration}. Specifically, the calibrated uncertainty set covers a new realization with probability at least $1-\epsilon$.

\begin{theorem}
\label{prop:coverage}
Let $\mathcal{D}_{\mathrm{cal}}=\{(\boldsymbol{X}_{n},\boldsymbol{y}_{n})\}_{n=1}^{N_2}$ be the calibration dataset, and let $(\boldsymbol{X}_{N_2+1},\boldsymbol{y}_{N_2+1})$ be a new sample from the same distribution. Denote the calibrated uncertainty sets obtained from Algorithm~\ref{alg:box_calibration} and Algorithm~\ref{alg:ellipsoid_calibration} by 
$\hat{\mathcal{U}}_{\mathrm{box}}(\boldsymbol{X}_{N_2+1})$ and 
$\hat{\mathcal{U}}_{\mathrm{ell}}(\boldsymbol{X}_{N_2+1})$, depending on the uncertainty set type. Then, for $e\in\{\mathrm{box},\mathrm{ell}\}$,
\begin{align}
\mathbb{P}
\left(
\boldsymbol{y}_{N_2+1}
\in
\hat{\mathcal{U}}_{e}
(\boldsymbol{X}_{N_2+1})
\right)
\geq
1-\epsilon .
\label{eq:prop_coverage}
\end{align}
\end{theorem}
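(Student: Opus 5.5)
The proof will follow the standard split conformal prediction argument. We treat both algorithms at once through a generic nonconformity score $s(\boldsymbol{X},\boldsymbol{y})$. For the box set, $s(\boldsymbol{X},\boldsymbol{y})=\max_i\max\{\underline{y}^i(\boldsymbol{X})-y^i,\,y^i-\overline{y}^i(\boldsymbol{X}),\,0\}$. For the ellipsoidal set, $s(\boldsymbol{X},\boldsymbol{y})=\sum_i (y^i-\mu_i(\boldsymbol{X}))^2/\sigma_i^2(\boldsymbol{X})$.

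\emph{First step: reduce set membership to a score inequality.} We show that $\boldsymbol{y}\in\hat{\mathcal{U}}_e(\boldsymbol{X})$ if and only if $s(\boldsymbol{X},\boldsymbol{y})\le q$ for the box set (respectively $\le\rho$ for the ellipsoid).
For the ellipsoid this is immediate from \eqref{eq:mv_uncertainty_set}.
For the box, suppose $s\le q$. Then $\underline{y}^i-y^i\le q$ and $y^i-\overline{y}^i\le q$ for every $i$, which is exactly the defining condition of \eqref{eq:box_uncertainty_set}. Conversely, membership gives both inequalities for all $i$, and $q\ge 0$ because all scores are nonnegative, so $s\le q$.
Hence in both cases the event of interest is $\{s_{N_2+1}\le s^{(r)}\}$, where $s_{N_2+1}=s(\boldsymbol{X}_{N_2+1},\boldsymbol{y}_{N_2+1})$.

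\emph{Second step: exchangeability.} The predictors $\hat f_{\mathrm{IQL}}$ and $\hat f_{\mathrm{MVL}}$ are trained on a training set independent of $\mathcal{D}_{\mathrm{cal}}$ and of the new sample, so we condition on them and treat the score map as fixed. The $N_2+1$ pairs are i.i.d., hence exchangeable, and the scores $s_1,\dots,s_{N_2+1}$ are therefore exchangeable. This implies that the rank of $s_{N_2+1}$ among all $N_2+1$ scores is uniform on $\{1,\dots,N_2+1\}$ when there are no ties. With ties, we break them uniformly at random, or equivalently argue that $\mathbb{P}(s_{N_2+1}\le s^{(r)}_{\text{all}})\ge r/(N_2+1)$, where $s^{(r)}_{\text{all}}$ is the $r$-th order statistic of the full collection.

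\emph{Third step: the key order-statistic fact.} We show $s_{N_2+1}\le s^{(r)}$ (the $r$-th smallest of the calibration scores) if and only if $s_{N_2+1}\le s^{(r)}_{\text{all}}$.
If $s_{N_2+1}$ is at most the $r$-th smallest calibration score, inserting it does not push that value beyond rank $r$ in the full list.
Conversely, if $s_{N_2+1}>s^{(r)}$, then at least $r$ calibration scores lie strictly below it, so its rank in the full list exceeds $r$.

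Combining the three steps gives
\[
\mathbb{P}(\boldsymbol{y}_{N_2+1}\in\hat{\mathcal{U}}_e)\ \ge\ \frac{r}{N_2+1}\ =\ \frac{\lceil (N_2+1)(1-\epsilon)\rceil}{N_2+1}\ \ge\ 1-\epsilon.
\]
Taking the expectation over the training randomness then yields the unconditional statement. If $r>N_2$, we adopt the convention $s^{(r)}=+\infty$, and the bound holds trivially.

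I expect the main obstacle to be the careful handling of ties in the rank argument, together with the precise equivalence in the third step. I would handle both via the "at least $r$ of the $N_2+1$ exchangeable scores are $\ge$ the $r$-th order statistic" counting argument, which avoids needing continuity of the score distribution.
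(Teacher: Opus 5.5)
Your proposal is correct and follows the same split-conformal argument as the paper. Both proofs rely on exchangeability of the scores under a fixed (pre-trained) score map, the rank bound $\mathbb{P}(s_{N_2+1}\le q)\ge r/(N_2+1)\ge 1-\epsilon$, and the equivalence between the score event and set membership. Your version is more careful than the paper's: you justify the passage from the calibration-only order statistic to the full-list one, note that $q\ge 0$ is needed for the box equivalence, and handle $r>N_2$.

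One slip: in your closing remark, the counting fact should say that at least $r$ of the $N_2+1$ scores are $\le$ (not $\ge$) the $r$-th order statistic of the full collection. That is the version which, via exchangeability, yields $\mathbb{P}(s_{N_2+1}\le s^{(r)}_{\mathrm{all}})\ge r/(N_2+1)$.
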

The proof of Theorem \ref{prop:coverage} can be found in the Appendix.

After constructing the uncertainty set, the problem can be reformulated as a standard robust optimization problem with a box or an ellipsoidal uncertainty set \cite{ben2009robust}. The box-based formulation leads to a mixed-integer linear programming (MILP) problem, whereas the ellipsoidal formulation introduces second-order cone (SOC) constraints, resulting in a mixed-integer second-order cone programming (MISOCP) problem. Both formulations can be efficiently solved using commercial solvers such as Gurobi. The overall procedure is summarized in Algorithm~\ref{alg:ulro_dc_scheduling}. By combining Theorem~\ref{prop:robust_transformation_ccp} and Theorem~\ref{prop:coverage}, we establish in Proposition~\ref{prop_jcc} that the solution returned by Algorithm~\ref{alg:ulro_dc_scheduling} satisfies the prescribed probabilistic feasibility guarantee for the original JCCs.
\begin{algorithm}[t]
\caption{ULRO for AI Data Center Scheduling}
\small
\label{alg:ulro_dc_scheduling}
\begin{algorithmic}[1]
\REQUIRE
Historical renewable generation dataset $\mathcal{D}^{\mathrm{ren}}_d$;
historical workload dataset $\mathcal{D}^{\mathrm{tra}}_c$, $\mathcal{D}^{\mathrm{inf}}_i$;
risk tolerance $\epsilon$;
new context $\boldsymbol{X}_{n+1}$;
uncertainty-set type $e\in \{\mathrm{box},\mathrm{ell}\}$.
\ENSURE
Workload schedules and energy management decisions $\{y^{\mathrm{tra}}_{k,c,t,d},G^{\mathrm{tra}}_{d,c,t},x_{i,d,p,t}, P^{\mathrm{grid}}_{d,t},P^{\mathrm{ch}}_{d,t},P^{\mathrm{dis}}_{d,t}\}$.
\FOR{each dataset $\mathcal{D}^{s}\in\{\mathcal{D}^{\mathrm{ren}}_d, \mathcal{D}^{\mathrm{tra}}_c, \mathcal{D}^{\mathrm{inf}}_i \}$ }
\STATE Split $\mathcal{D}^s$ into a training set $\mathcal{D}_1^s$ and a calibration set $\mathcal{D}_2^s$.
\STATE Train the uncertainty-learning model on  $\mathcal{D}_1^s$ using the loss corresponding to $e$.
\ENDFOR
\FOR{each uncertain vector $\xi_l$ associated with JCC $l$}
\STATE Predict and calibrate the learned uncertainty set on the new sample $X_{n+1}^{\xi_l}$ using $\mathcal{D}_2^{\xi_l}$ and Algorithm~\ref{alg:box_calibration} if $e=\mathrm{box}$, or using Algorithm~\ref{alg:ellipsoid_calibration} if $e=\mathrm{ell}$.
\STATE Obtain the calibrated contextual uncertainty set
$\widehat{\mathcal{U}}^{\xi_l}_{e}(\boldsymbol{X}_{n+1})$.
\ENDFOR
\STATE Substitute the calibrated uncertainty sets into the RO formulation \eqref{eq:ro_multiple_jcc}.
\STATE  Reformulate and solve the resulting problem using standard RO techniques (e.g., column generation algorithm).
\RETURN $\{y^{\mathrm{tra}}_{k,c,t,d},G^{\mathrm{tra}}_{d,c,t},x_{i,d,p,t}, P^{\mathrm{grid}}_{d,t},P^{\mathrm{ch}}_{d,t},P^{\mathrm{dis}}_{d,t}\}$
\end{algorithmic}
\end{algorithm}

\begin{proposition}[Probabilistic feasibility guarantee for JCCs]
\label{prop_jcc} 
For the new sample $X_{n+1}$, suppose that each uncertainty set $\hat{\mathcal{U}}^{\xi_l}_{e}(X_{n+1})$, $e\in{\{\mathrm{box},\mathrm{ell}}\}$, is constructed and calibrated using Algorithms \ref{alg:box_calibration} and \ref{alg:ellipsoid_calibration}, respectively. Then any feasible solution of the RO formulation \eqref{eq:ro_multiple_jcc} satisfies the corresponding original JCCs in \eqref{eq:standard_c_ccp} at the prescribed confidence levels.
\end{proposition}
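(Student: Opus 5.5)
The plan is to chain Theorem~\ref{prop:coverage} into Theorem~\ref{prop:robust_transformation_ccp}, handling one chance constraint $l\in\mathcal{L}$ at a time. The multiple JCCs in \eqref{eq:standard_c_ccp} are separate constraints, each with its own risk level $\epsilon_l$ and its own uncertain vector $\boldsymbol{\xi}_l$. Because of this, no union bound across $l$ is needed.

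Fix $l$. Algorithm~\ref{alg:ulro_dc_scheduling} calibrates the set for $\boldsymbol{\xi}_l$ with the calibration split $\mathcal{D}_2^{\xi_l}$ and risk tolerance $\epsilon_l$. This is done with Algorithm~\ref{alg:box_calibration} or Algorithm~\ref{alg:ellipsoid_calibration}, depending on $e$.

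First, I will check that the hypotheses of Theorem~\ref{prop:coverage} hold. The trained model $\hat f$ is fitted only on $\mathcal{D}_1^{\xi_l}$, so it is fixed (independent of) the calibration data. The calibration pairs together with the test pair $(\boldsymbol{X}_{n+1},\boldsymbol{\xi}_{l,n+1})$ are exchangeable, since they are drawn from the same distribution. Theorem~\ref{prop:coverage} then gives
\begin{align*}
\mathbb{P}\big(\boldsymbol{\xi}_l\in\hat{\mathcal{U}}^{\xi_l}_e(\boldsymbol{X}_{n+1})\big)\ge 1-\epsilon_l .
\end{align*}
The underlying reason is that the rank of the new score $s_{n+1}$ among $s_1,\dots,s_{n+1}$ is uniform. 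Hence $\mathbb{P}(s_{n+1}\le s^{(r)})\ge r/(N_2+1)\ge 1-\epsilon_l$ when $r=\lceil (N_2+1)(1-\epsilon_l)\rceil$. The event $s_{n+1}\le q$ is exactly membership in the box set \eqref{eq:box_uncertainty_set}, and $s_{n+1}\le\rho$ is exactly membership in the ellipsoid \eqref{eq:mv_uncertainty_set}.

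Second, I will apply Theorem~\ref{prop:robust_transformation_ccp}. Let $\boldsymbol{x}$ be feasible for \eqref{eq:ro_multiple_jcc} with these sets. Then we have the event inclusion $\{\boldsymbol{\xi}_l\in\hat{\mathcal{U}}_l\}\subseteq\{h_l(\boldsymbol{x},\boldsymbol{\xi}_l)\le 0\}$. Therefore $\mathbb{P}(h_l(\boldsymbol{x},\boldsymbol{\xi}_l)\le0)\ge 1-\epsilon_l$. Repeating this for every $l$ yields all JCCs (and the ICCs as the case $m_l=1$) at their prescribed levels.

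The main obstacle is the gap between marginal and conditional guarantees. Theorem~\ref{prop:coverage} is marginal: its probability is taken over the calibration data and the new pair jointly. By contrast, \eqref{eq:standard_c_ccp} is stated conditionally on $X=\psi_l$. There is also a subtlety about the decision: $\boldsymbol{x}$ depends on the calibration data through the set, so the inclusion argument must be applied pathwise, not with $\boldsymbol{x}$ fixed in advance.

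I will handle the dependence of $\boldsymbol{x}$ by noting that $\hat{\mathcal{U}}_l\subseteq\{\boldsymbol{\xi}:h_l(\boldsymbol{x},\boldsymbol{\xi})\le0\}$ holds for every realization of the data. The probability bound therefore transfers to the random $\boldsymbol{x}$ without needing independence.

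For the conditioning issue, I will state explicitly that the guarantee is understood in the same sense as Theorem~\ref{prop:coverage}. That is, probability is over the joint draw of calibration data and the new context–realization pair, which is the sense in which Theorem~\ref{prop:robust_transformation_ccp}'s hypothesis $\mathbb{P}(\boldsymbol{\xi}_l\in\mathcal{U}_l(\psi_l))\ge1-\epsilon_l$ is used. Exact conditional coverage at a fixed $\psi_l$ is not attainable distribution-free, so I will not try to claim it.
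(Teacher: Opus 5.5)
Your proposal is correct and follows the paper's own route: the paper justifies Proposition~\ref{prop_jcc} only by combining Theorems~\ref{prop:robust_transformation_ccp} and~\ref{prop:coverage}, i.e., by feeding the coverage bound of Theorem~\ref{prop:coverage} into the event-inclusion argument of Theorem~\ref{prop:robust_transformation_ccp} for each constraint $l$. Your pathwise handling of the data-dependent $\boldsymbol{x}$ and your caveat that the guarantee is marginal rather than conditional on $X=\psi_l$ are points the paper leaves implicit; they correctly describe what this combination actually delivers, since Theorem~\ref{prop:coverage} is itself a marginal statement, whereas the JCCs in \eqref{eq:standard_c_ccp} are stated conditionally.
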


\section{Numerical studies}

\label{Numerical_studies}
\subsection{Data Description and Experimental Settings}

 We consider a set of three geographically distributed data centers. For AI training workloads, we use the data derived from the Alibaba cluster trace.\footnote{https://github.com/alibaba/clusterdata} This dataset includes the submission time, required GPU resources, and completion time of each AI training task, covering frameworks such as PyTorch and TensorFlow and multiple GPU types, including V100, T4, and P100. We classify the training workloads according to GPU type and quantify each workload class by its aggregated GPU time. The corresponding latency SLOs $h_c$ are set to 2, 4, and 7 hours, respectively. 
 For inference services, we employ the public Azure LLM serving traces,\footnote{https://github.com/Azure/AzurePublicDataset} which contain requests from both coding and conversation tasks. LLM serving latency and energy demand are strongly affected by the request structure. In particular, the input length mainly affects the prefill stage, whereas the output length governs the decoding stage.  This structure leads to heterogeneous latency and power requirements across requests. Following \cite{stojkovic2025dynamollm} and \cite{reddy2025ai}, we classify each class of inference requests into nine categories according to the combination of input and output token lengths, denoted by \{SS, SM, SL, MS, MM, ML, LS, LM, LL\}. Here, \textit{S}, \textit{M}, and \textit{L} represent short, medium, and long token sequences, with cutoffs defined by the 33rd, 66th, and 100th percentiles, respectively. The temporal request arrivals and the proportions of different request categories are reported in Figs.~\ref{fig:arriving} and~\ref{fig:classes}, which show clear periodicity. The class-wise latency and power parameters used in the experiments are taken from \cite{stojkovic2025dynamollm} and \cite{reddy2025ai}.
  Other data-center configuration parameters used in the experiments are adopted from \cite{yang2023two} and summarized in Table I. Each data center is also associated with an on-site renewable energy project. In the case study, the data centers in California and the Netherlands are equipped with PV resources, while a wind resource supplies the data center in Germany. Four years of hourly PV and wind generation profiles are simulated from public historical weather data following the methodology in \cite{staffell2016using}.\footnote{https://www.renewables.ninja} The corresponding carbon intensity is calculated based on the generation mix data provided by CAISO\footnote{https://www.caiso.com} and ENTSO-E\footnote{https://transparency.entsoe.eu}, while the carbon emission factors for each energy source are taken from \cite{maji2022carboncast}.

\begin{figure}[t]
\centering
\includegraphics[scale=0.32]{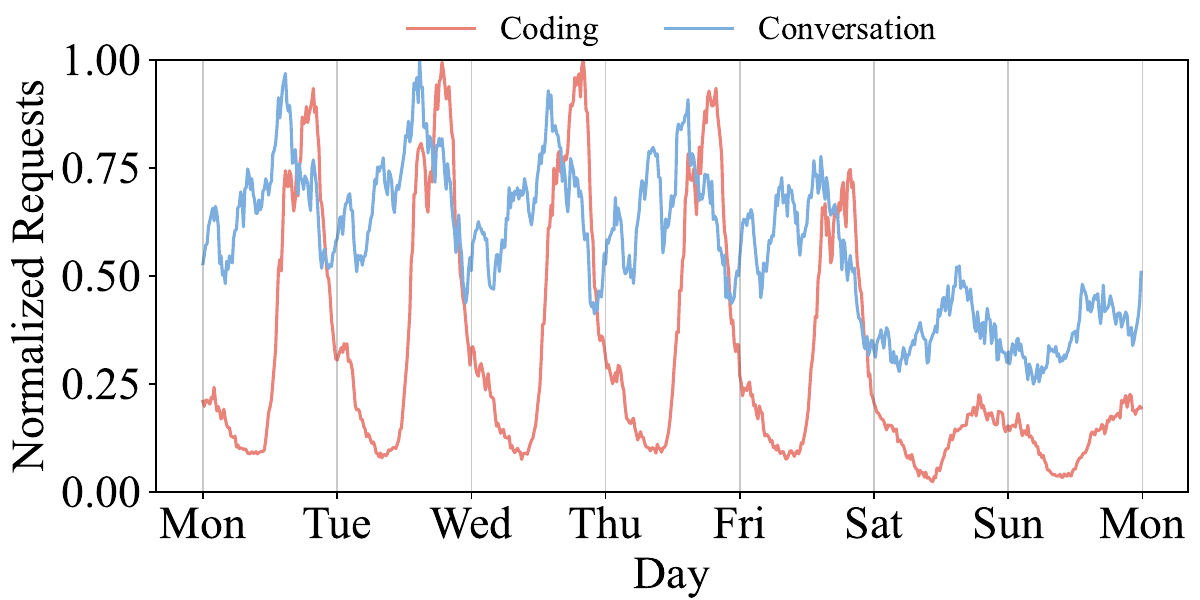}
\caption{Arrival patterns of coding and conversation tasks}
\label{fig:arriving}
\end{figure}

\begin{figure}[t]
\centering
\includegraphics[scale=0.3]{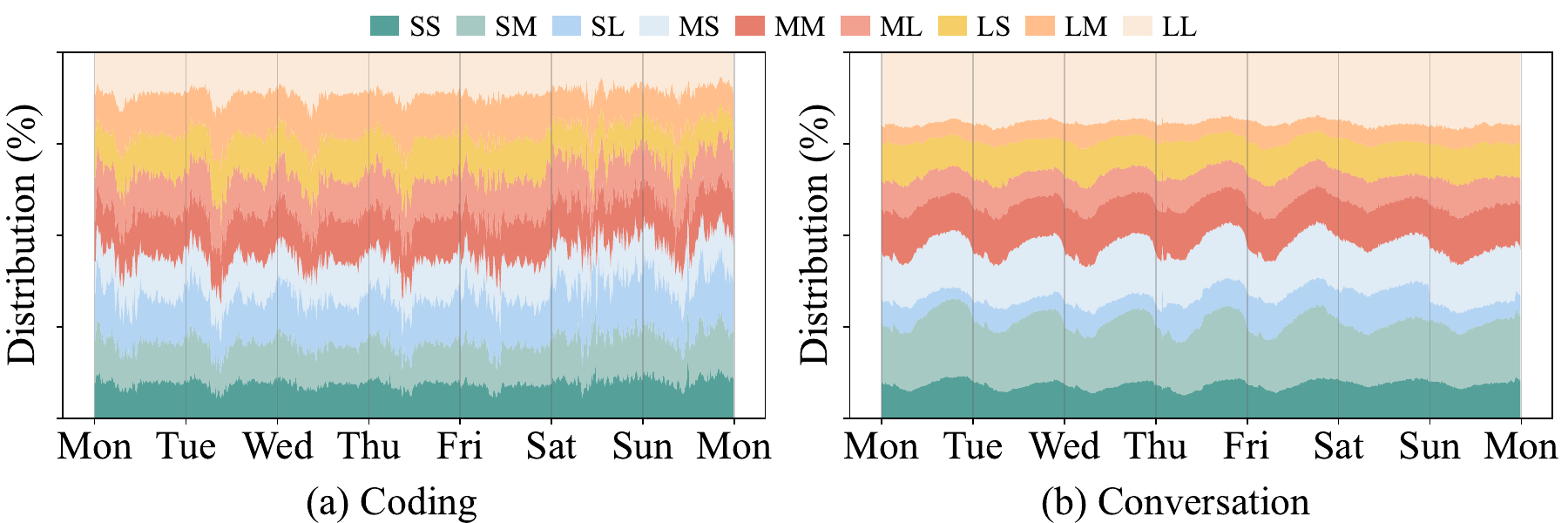}
\caption{Distribution of classes of coding and conversation tasks}
\label{fig:classes}
\end{figure}

\begin{table}[!t]
\centering
\footnotesize 
\caption{Relevant parameters for the data centers}
\label{tab:g1}
\centering
\resizebox{0.9\columnwidth}{!}{
\begin{tabular}{cccc}
\toprule
Parameters& Values & Parameters &Values\\
\midrule
$\text{PUE}_{d}$ & [1.2, 1.25, 1.3] & $\text{GPU}_{d}$ & [1.53, 1.53, 1.53] \\
$P^{\text{peak}}_{\text{tra}}$ (kW) & [0.25, 0.07, 0.25] & $G^{\text{tra}}_{\max}$ & [5000, 5000, 5000] \\
$G^{\text{inf}}_{\max}$ & [5500, 4600, 4700] & $P^{\text{grid}}_{\max}$ (kW) & [1470, 1550, 1580] \\
$h_c$ (h)& [2, 4, 7] & $M^{\text{ess}}_{\max}$ (kWh) & [400, 400, 400] \\
$P^{\text{ch}}_{\max}$ (kW) & [80, 80, 80] & $P^{\text{dis}}_{\max}$ (kW) & [80, 80, 80] \\
$\eta^{\text{ch}}$ & [0.95, 0.95, 0.95] & $\eta^{\text{dis}}$ & [0.95, 0.95, 0.95] \\
\bottomrule
\end{tabular}}
\end{table}

Day-ahead renewable generation and AI workload forecasts are obtained using multi-layer perceptron (MLP) models. For the baseline, the prediction models are trained using the MSE loss and use observations from the preceding 24 hours and temporal features, including day, week, and season of the year information. For renewable generation, weather-related features are also included, i.e., surface irradiance forecasts for PV generation and wind speeds for wind generation. The data are divided into training and testing sets with a 60\% and 40\% split. All simulations are conducted in Python on a Jetson AGX Orin module with an Arm Cortex-A78AE v8.2 CPU, an NVIDIA Ampere GPU, and 64 GB RAM. The optimization models are solved using Gurobi 11.0.3.


\subsection{Baselines}
We refer to the proposed method as ULRO and evaluate its performance against several widely used benchmarks for decision-making under uncertainty.

1) \textit{Classical Adaptive Robust Optimization (ARO)}:
ARO constructs covariate-independent uncertainty sets, whose size is calibrated to attain the prescribed empirical coverage \cite{ben2009robust}. We consider both box and ellipsoidal uncertainty sets, denoted by ARO-B and ARO-E, respectively.

2) \textit{Traditional Distributionally Robust Chance-Constrained Optimization (DRCC)}:
DRCC assumes that the unknown distribution belongs to a prescribed ambiguity set \cite{calafiore2006distributionally}. The joint chance constraints are decomposed into individual chance constraints using the Bonferroni approximation.

3) \textit{Residuals-based Distributionally Robust Optimization (ReDRO)}:
ReDRO is a covariate-dependent robust optimization method that constructs uncertainty sets from prediction residuals using a sample average approximation scheme \cite{kannan2024residuals}.

4) \textit{CVaR-based Data-Driven Distributionally Joint Chance-Constrained Optimization (CVaR-DJCC)}:
It builds a Wasserstein ambiguity set from historical data and approximates the resulting distributionally robust JCCs using Conditional Value-at-Risk (CVaR) for computational tractability \cite{zhai2022data}.

\begin{figure}[t]
\centering
\includegraphics[scale=0.35]{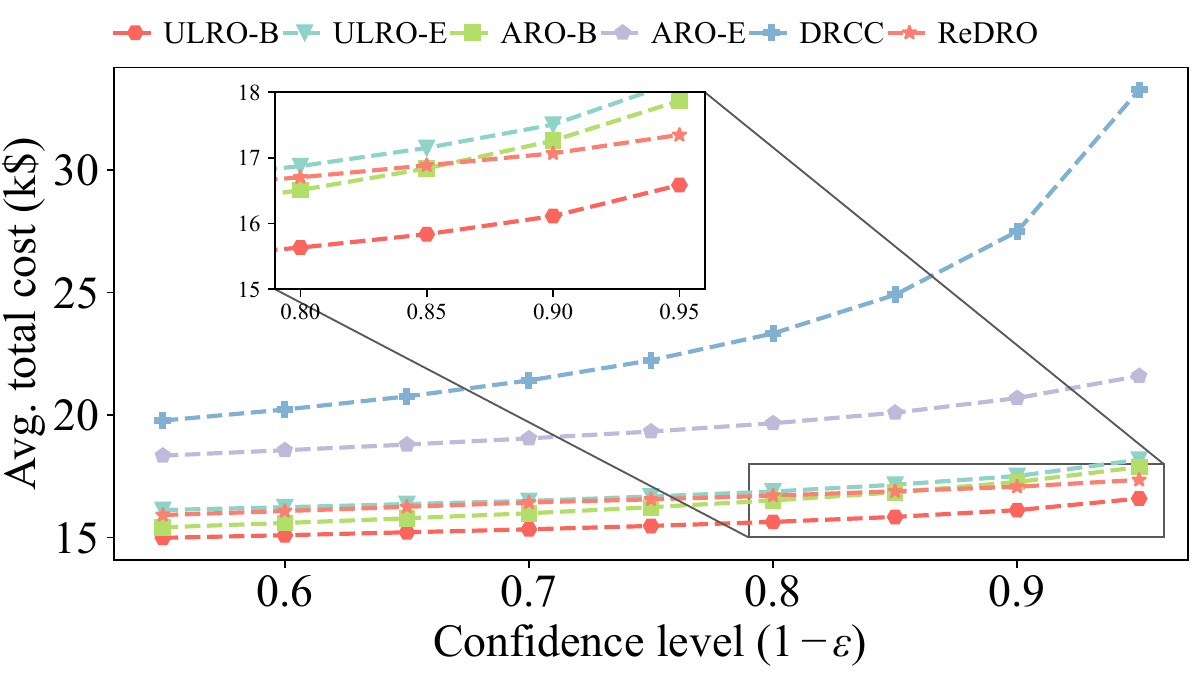}
\caption{Average total cost of the data center with different methods and coverage levels (k\$)}
\label{fig:cost_epison}
\end{figure}

\begin{figure}[t]
\centering
\includegraphics[scale=0.35]{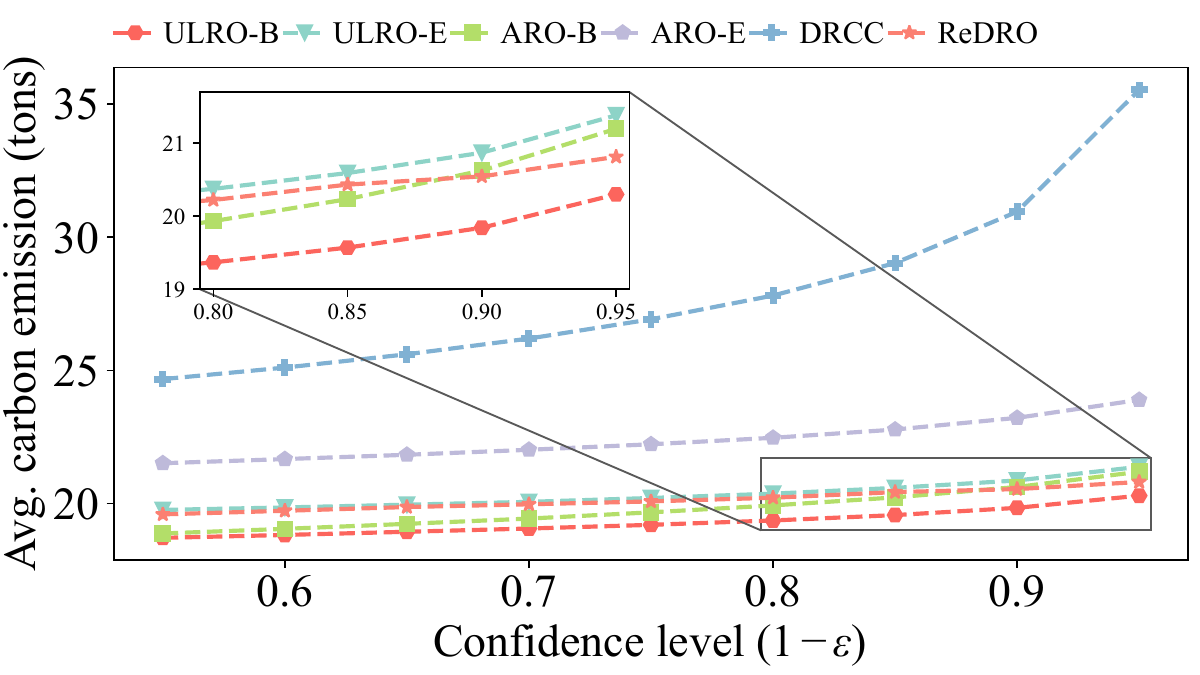}
\caption{Average carbon emissions of the data center with different methods and coverage levels (tons)}
\label{fig:carbon_epison}
\end{figure}

\begin{figure}[t]
\centering
\includegraphics[scale=0.35]{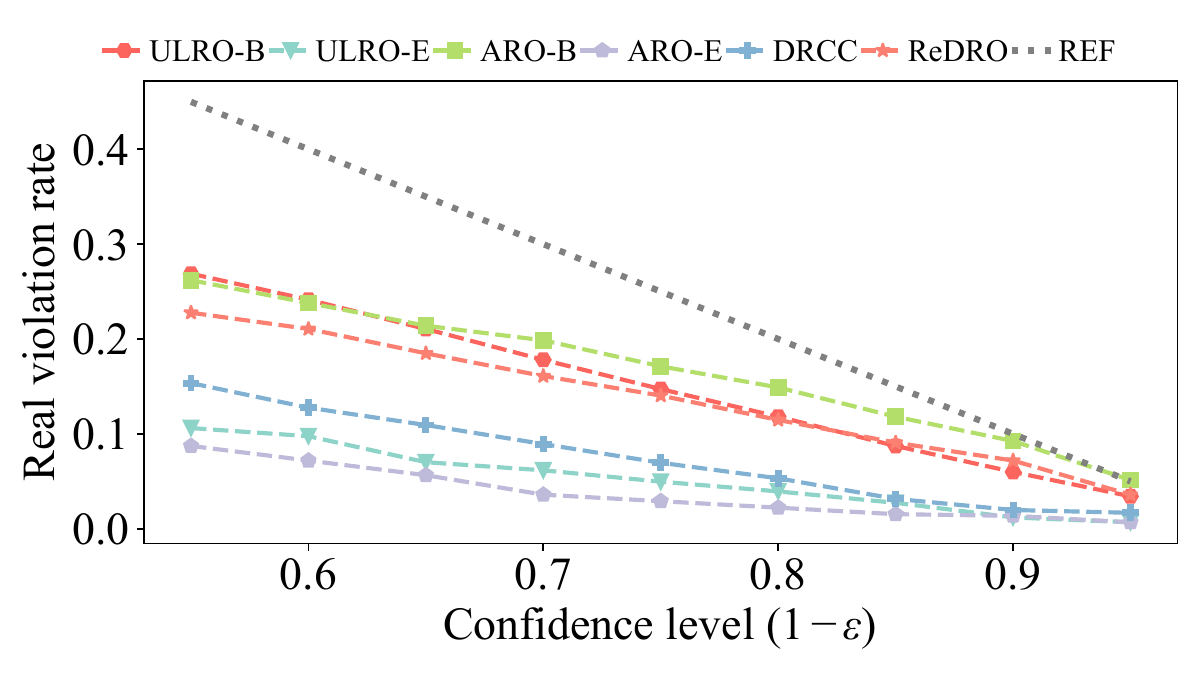}
\caption{Probability of constraint violation in the chance constraints under different methods}
\label{fig:vio_epison}
\end{figure}

\subsection{Experiments}
1)\textit{ Compared with benchmarks.} 
Figs.~\ref{fig:cost_epison} and \ref{fig:carbon_epison} present the average operating cost and carbon emissions under different confidence levels $1-\epsilon$, where the carbon price is set to $\$0.1$/kg. Across all confidence levels, the proposed ULRO approach yields the most favorable cost-emission performance among the compared methods. In particular, when $1-\epsilon=0.9$, compared with the corresponding covariate-independent method ARO-B, ULRO-B reduces the operational cost and carbon emissions by 6.67\% and 3.78\%, respectively. Compared with covariate-dependent ReDRO, ULRO-B reduces the operational cost and carbon emissions by 5.57\% and 3.41\%, respectively. The results demonstrate the advantage of incorporating covariate information into uncertainty-set construction. The proposed covariate-dependent methods, ULRO-B and ULRO-E, outperform their covariate-independent counterparts, ARO-B and ARO-E, respectively.  Under stringent reliability requirements, i.e., when $1-\epsilon$ exceeds 0.9, ULRO-B and ReDRO achieve better performance than other covariate-independent methods. 
Fig. \ref{fig:vio_epison} compares the empirical constraint violation probabilities obtained by different methods. The results show that ULRO achieves this feasibility without sacrificing economic performance, thereby providing a more effective balance between robustness and operational efficiency.  

\begin{table}[!t] \centering \caption{Running time under different sample sizes (s)} \label{tab:time_sample} \centering \resizebox{0.9\columnwidth}{!}{ \begin{tabular}{cccccc} \toprule $N$ & DRCC &ARO-B & ReDRO & CVaR-DJCC & \textbf{ULRO-B} \\ \midrule 100 & 4.26 & 0.77 & 5.91 & 48.97 & 0.73 \\ 200 & 4.18 & 0.70 & 10.96 & 155.37 & 0.81 \\ 300 & 4.22 & 0.77 & 16.13 & 373.75 & 0.79 \\ 400 & 4.32 & 0.76 & 21.12 & 696.93 & 0.78 \\ 500 & 4.22 & 0.77 & 26.14 & 1100.33 & 0.72 \\ 1000 & 4.31 & 0.69 & 52.54 & - & 0.78 \\ \bottomrule \end{tabular}} \end{table} 

\begin{table}[!t] \centering \caption{Running time under different Num. of DCs (s)} \label{tab:time_DC_nums} \centering \resizebox{0.9\columnwidth}{!}{ \begin{tabular}{cccccc} \toprule $|D|$ & DRCC &ARO-B & ReDRO & CVaR-DJCC & \textbf{ULRO-B}\\ \midrule 10 & 14.90 & 2.61 & 72.13 & 391.89 & 2.55 \\ 20 & 21.24 & 5.45 & 135.00 & 716.65 & 5.49 \\ 30 & 35.39 & 8.58 & 195.53 & 841.84 & 8.53 \\ 40 & 46.00 & 11.13 & 259.35 & 1092.56 & 11.71 \\ 50 & 52.59 & 13.43 & 324.91 & 1291.59 & 13.16 \\ 100 & 118.93 & 27.19 & 647.59 & - & 27.15 \\ \bottomrule 
\end{tabular}} \end{table}

2) \textit{Scalability.} 
Tables~\ref{tab:time_sample} and \ref{tab:time_DC_nums} show the average computational time of the compared methods under different sample sizes and numbers of data centers, respectively. ULRO-B and ARO-B achieve the lowest computational costs. Their solution times remain nearly stable as the sample size increases and grow only moderately with the number of data centers. In contrast, ReDRO and CVaR-DJCC require substantially longer solution times, particularly under large sample sizes and large-scale data center systems. 
Compared with other covariate-dependent methods, ULRO-B exhibits much better scalability. For example, with 100 data centers, ReDRO requires 647.59~s, whereas ULRO-B requires only 27.15~s, making ULRO-B approximately 24 times faster. 
This demonstrates that our ULRO is well-suited for large-scale data center scheduling with multiple workload classes and renewable energy sources.

\begin{figure}[t]
    \centering
    \label{fig:carbon_cost_aware}
    \begin{subfigure}[b]{0.4\textwidth} 
        \centering
    \includegraphics[width=\textwidth]{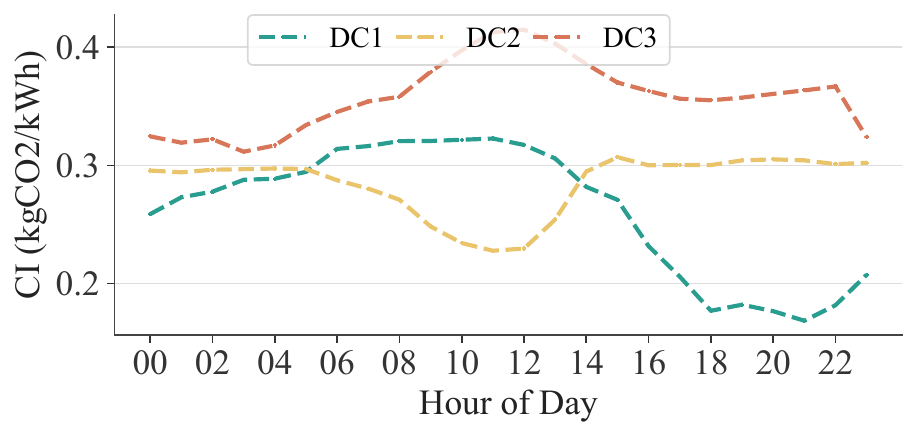} 
        \caption{Carbon intensity (kgCO2/kWh)}
    \label{fig:carbon_intensity}
    \end{subfigure}
    
    \vspace{-0.1em} 
    
    \begin{subfigure}[b]{0.4\textwidth}
        \centering
        \includegraphics[width=\textwidth]{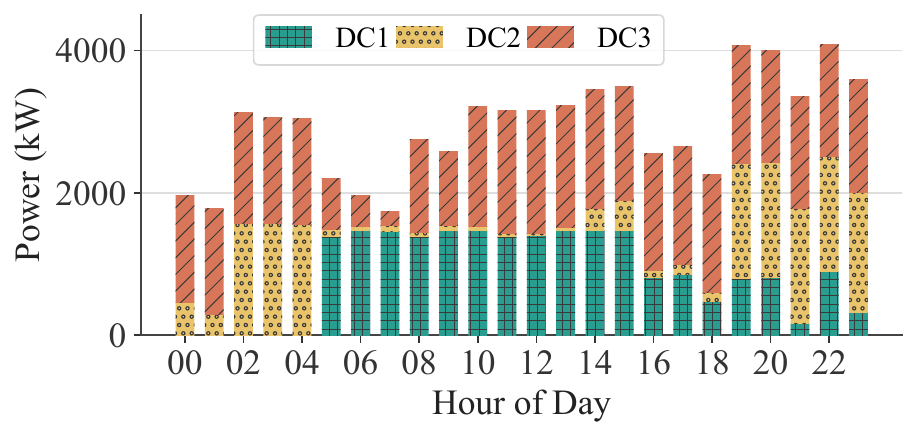}
        \caption{Cost-aware results}
\label{fig:cost_aware_workload}
    \end{subfigure}
        \vspace{-0.1em} 
      \begin{subfigure}[b]{0.4\textwidth}
        \centering
        \includegraphics[width=\textwidth]{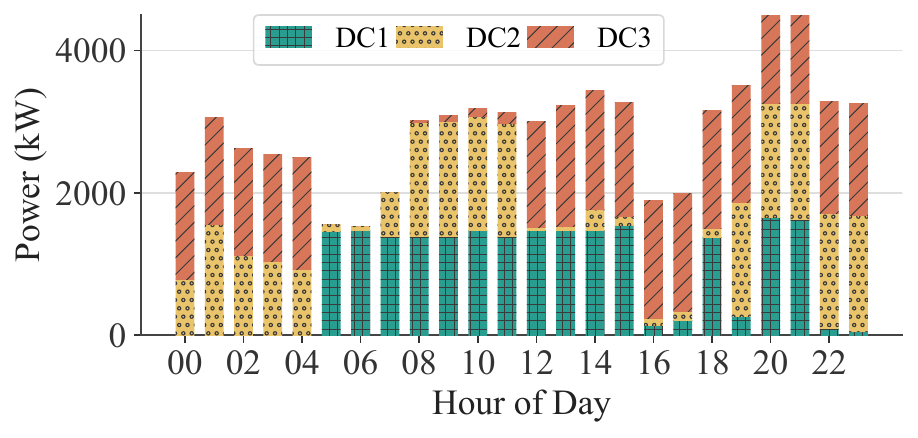}
        \caption{Carbon-aware results}
\label{fig:carbon_aware_workload}
    \end{subfigure} 
        \caption{Energy consumption profiles in cost-aware and carbon-aware scenarios.}
    \label{fig:carbon_cost_aware}
\end{figure}

3) \textit{Impacts of carbon price.} Tables~\ref{tab:carbon_price_obj} and \ref{tab:carbon_price_carbon} show the average operational cost and carbon emissions under different carbon prices, respectively. A higher carbon price increases the total cost but consistently reduces emissions, indicating that carbon pricing encourages cleaner scheduling decisions. Compared with the cost-aware case with zero carbon price, ULRO-B reduces carbon emissions by 2.12\% and 5.37\% under carbon prices of \$0.1/kg and \$0.2/kg, respectively. This trend is further illustrated in Fig.~\ref{fig:carbon_cost_aware}. DC1 becomes much cleaner during 18:00-24:00, whereas DC2 is cleaner during 8:00-14:00. The cost-aware schedule in Fig.~\ref{fig:cost_aware_workload} allocates workloads mainly according to electricity cost, which results in workload placement at high-carbon data centers and periods. By contrast, the carbon-aware schedule in Fig.~\ref{fig:carbon_aware_workload} shifts more workloads to cleaner locations and hours, such as allocating more demand to DC1 during its low-carbon evening period and reducing the use of higher-carbon DC3 in these intervals. 

\begin{table}[!t]
\centering
\caption{Average total cost by carbon price (k\$)}
\label{tab:carbon_price_obj}
\resizebox{0.9\columnwidth}{!}{
\begin{tabular}{cccccc}
\toprule
$p_{co_2}$ (\$/kg) & 
DRCC  &ARO-B & ReDRO &\textbf{ULRO-B}  \\
\midrule
0.0 & 24.38 & 15.18 & 15.01 & \textbf{14.10}\\
0.1 & 27.49 & 17.26 & 17.07 & \textbf{16.11}  \\
0.2 & 30.55 & 19.30 & 19.09 & \textbf{18.07} \\
\midrule
\end{tabular}}
\end{table}

\begin{table}[!t]
\centering
\caption{Average carbon emission by carbon price (tons)}
\label{tab:carbon_price_carbon}
\resizebox{0.9\columnwidth}{!}{
\begin{tabular}{ccccc}
\toprule
$p_{co_2}$ (\$/kg) & 
DRCC  &ARO-B & ReDRO & \textbf{ULRO-B}  \\
\midrule
0.0 & 31.23 & 20.96 & 20.67 & \textbf{20.27}  \\
0.1 & 30.96 & 20.63 & 20.55 & \textbf{19.84}  \\
0.2 & 30.00 & 19.94 & 19.64 & \textbf{19.18}  \\
\midrule
\end{tabular}}
\end{table}

\begin{figure}[!t]
    \centering
    \begin{subfigure}[b]{0.4\textwidth} 
        \centering
        \includegraphics[width=\textwidth]{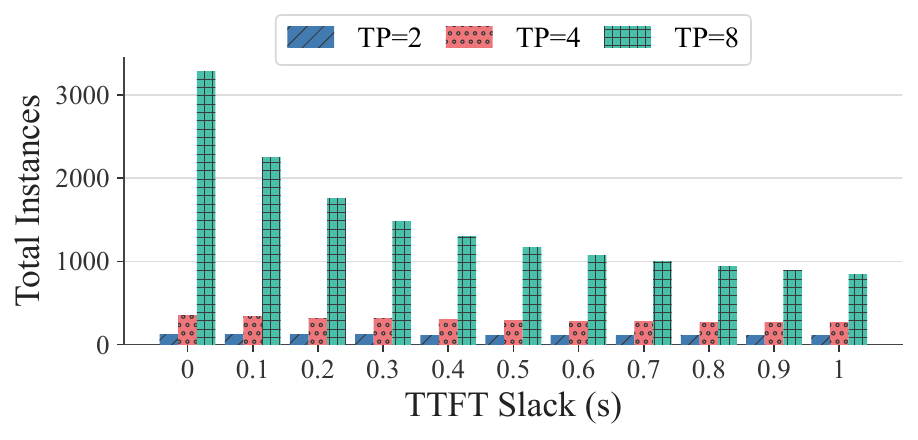} 
        \caption{Coding}
        \label{coding_ttft}
    \end{subfigure}
    
    \vspace{-0.1em} 
    
    \begin{subfigure}[b]{0.4\textwidth}
        \centering
        \includegraphics[width=\textwidth]{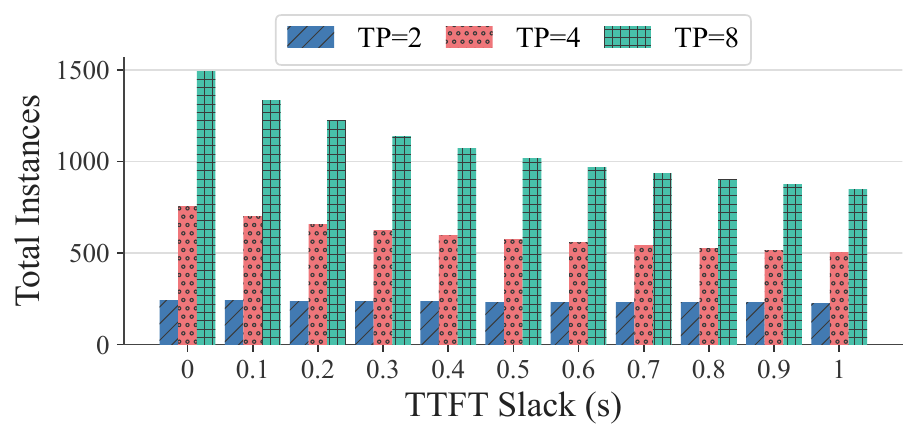}
        \caption{Conversation}
        \label{ttft_conversation}
    \end{subfigure}
    \caption{Number of instances of coding and conversation inference tasks under different TTFT slack during one day}

\end{figure}

4) \textit{Impacts of TTFT latency SLO of inference workloads.} We define TTFT slack as the additional time allowed before generating the first output token. A larger slack indicates a more relaxed latency requirement. Tables~\ref{tab:cdt_carbon} and \ref{tab:cdt_obj} show the power consumption of inference tasks and average carbon emissions under different TTFT slack values, respectively. As TTFT slack increases, both power consumption and carbon emissions of inference clusters decrease. 
This trend is explained in Figs.~\ref{coding_ttft} and \ref{ttft_conversation}. Under tight latency requirements, data centers must provision more inference instances and adopt higher TP-degree configurations, i.e., using more GPUs per request, to reduce first-token latency. These configurations improve serving speed but increase power consumption. Since coding tasks typically have longer input sequences than conversation tasks, they require more instances and higher TP degrees to satisfy the TTFT SLO.

\begin{table}[!t]
\centering
\caption{Average carbon emission of inference tasks by TTFT slack and method (tons)}
\label{tab:cdt_carbon}
\resizebox{0.9\columnwidth}{!}{
\begin{tabular}{ccccc}
\toprule
TTFT Slack (s) & 
DRCC  &ARO-B & ReDRO &ULRO-B  \\
\midrule
0.0 & 14.03 & 8.29 & 7.78 & \textbf{8.02} \\
0.5 & 12.13 & 7.04 & 6.70 & \textbf{6.79}  \\
1.0 & 11.75 & 6.79 & 6.47 & \textbf{6.53}  \\
\hline
\end{tabular}}
\end{table}

\begin{table}[!t]
\centering
\caption{Average cost of inference tasks by TTFT slack and method (k\$)}
\label{tab:cdt_obj}
\resizebox{0.9\columnwidth}{!}{
\begin{tabular}{ccccc}
\toprule
TTFT Slack (s) & 
DRCC  &ARO-B & ReDRO &\textbf{ULRO-B}  \\
\midrule
0.0 & 23.05 & 13.10 & 12.20 & \textbf{12.57} \\
0.5 & 19.42 & 11.02 & 10.33 & \textbf{10.57} \\
1.0 & 18.70 & 10.61 & 9.95  & \textbf{10.17}  \\
\hline
\end{tabular}}
\end{table}

\section{Conclusion}
\label{conclusion}
Decarbonizing AI data centers is essential for supporting sustainable computing. This paper proposes a contextual robust optimization framework for carbon-aware AI data center operation. The proposed model captures the heterogeneous computing characteristics of AI training and inference workloads and jointly optimizes delay-tolerant training workload allocation and latency-sensitive inference request routing. To ensure reliable operation under forecast errors in renewable generation and workload arrivals, the scheduling problem was first formulated as a contextual CCP with multiple JCCs.  We further transformed them into a tractable RO formulation using calibrated covariate-dependent uncertainty sets. Numerical studies based on real-world AI workload traces and renewable generation data demonstrate that the proposed method achieves reliable feasibility performance, improved operational efficiency, and strong scalability compared with benchmark methods.

\bibliographystyle{IEEEtran}
\bibliography{mybib.bib}

\section*{Appendix}
\setcounter{equation}{0}  
\renewcommand{\theequation}{A.\arabic{equation}}
\subsection{Proof for Theorem 1}

\begin{proof}
Let $\boldsymbol{x}$ be any feasible solution of the robust problem \eqref{eq:ro_multiple_jcc}. Then, by the robust feasibility condition, for every joint chance constraint $l$, we have
\begin{align}
g_{l,j}(\boldsymbol{x},\tilde{\boldsymbol{\xi}}_l)\le 0,
\quad
\forall \tilde{\boldsymbol{\xi}}_l\in\mathcal{U}_l(\boldsymbol{\psi}_l),
\quad
\forall j\in[m_l].
\end{align}
Therefore, whenever the realization $\tilde{\boldsymbol{\xi}}_l$ belongs to the uncertainty set $\mathcal{U}_l(\boldsymbol{\psi}_l)$, all individual constraints in the $l$-th joint constraint are satisfied simultaneously. 

Equivalently,
\begin{align}
\left\{
\tilde{\boldsymbol{\xi}}_l\in\mathcal{U}_l(\boldsymbol{\psi}_l)
\right\}
\subseteq
\left\{
g_{l,j}(\boldsymbol{x},\tilde{\boldsymbol{\xi}}_l)\le 0,
\ \forall j\in[m_l]
\right\}.
\end{align}
Taking the conditional probability on both sides yields
\begin{align}
&\resizebox{0.88\linewidth}{!}{$
\mathbb{P}_{\boldsymbol{\psi}_l}
\left(
g_{l,j}(\boldsymbol{x},\tilde{\boldsymbol{\xi}}_l)\le 0,
\ \forall j\in[m_l]
\right) \ge
\mathbb{P}_{\boldsymbol{\psi}_l}
\left(
\tilde{\boldsymbol{\xi}}_l\in\mathcal{U}_l(\boldsymbol{\psi}_l)
\right)
\ge 1-\epsilon_l$.}
\end{align}
Since the above argument holds for every $l$, $\boldsymbol{x}$ satisfies all contextual joint chance constraints in \eqref{eq:standard_ccp}. Therefore, any feasible solution of \eqref{eq:ro_multiple_jcc} is feasible for the contextual chance-constrained problem \eqref{eq:standard_ccp}.
\end{proof}

\setcounter{equation}{0}  
\renewcommand{\theequation}{B.\arabic{equation}}
\subsection{Proof for Theorem 2}
\begin{proof}
Let $s_k^n$ denote the calibration score of the sample 
$(\boldsymbol{X}^n,\boldsymbol{y}^n)$, and let 
$s_k^{\mathrm{N+1}}$ denote the score of the new sample. Specifically, $s_k^n$ is given by the joint interval violation score for the box set and by the normalized residual score for the ellipsoidal set.

By assumption, the calibration samples and the new sample are exchangeable. Since each score is obtained by applying the same deterministic scoring function to each sample, the scores
\[
s_k^1,\dots,s_k^{N},s_k^{N+1}
\]
are also exchangeable. Let
\[
r_{\epsilon}
=
\left\lceil
(N+1)(1-\epsilon)
\right\rceil ,
\]
and define the calibrated threshold as the $r_{\epsilon}$-th order statistic of the calibration scores,
\[
q_k = s_{k,(r_{\epsilon})}.
\]
By the exchangeability of the 
$N+1$ scores, the rank of $s_k^{N+1}$ among 
$\{s_k^1,\dots,s_k^{N},s_k^{N+1}\}$ is uniformly distributed over 
$\{1,\dots,N+1\}$, up to ties. Therefore,
\begin{align}
\mathbb{P}
\left(
s_k^{N+1} \leq q_k
\right)
\geq
\frac{r_{\epsilon}}{N+1}
\geq
1-\epsilon .
\end{align}

It remains to connect the score event to set membership. By the construction of the calibrated uncertainty sets, the event 
$s_k^{N+1}\leq q_k$ is equivalent to 
\[
\boldsymbol{y}^{N+1}
\in
\mathcal{U}_{\epsilon}^{k}
(\boldsymbol{X}^{N+1}).
\]
Hence,
\begin{align}
\mathbb{P}
\left(
\boldsymbol{y}^{n+1}
\in
\mathcal{U}_{\epsilon}^{k}
(\boldsymbol{X}^{n+1})
\right)
=
\mathbb{P}
\left(
s_k^{N+1}\leq q_k
\right)
\geq
1-\epsilon .
\end{align}
\end{proof}

\end{document}